\DeclareFontFamily{U}{matha}{\hyphenchar\font45}
\DeclareFontShape{U}{matha}{m}{n}{
	<5> <6> <7> <8> <9> <10> gen * matha
	<10.95> matha10 <12> <14.4> <17.28> <20.74> <24.88> matha12
}{}
\DeclareSymbolFont{matha}{U}{matha}{m}{n}
\DeclareMathSymbol{\Lt}{3}{matha}{"CE}
\DeclareMathSymbol{\Gt}{3}{matha}{"CF}
\DeclareSymbolFont{mathc}{OML}{txmi}{m}{it}
\DeclareMathSymbol{\varvv}{\mathord}{mathc}{118}
\DeclareMathSymbol{\varww}{\mathord}{mathc}{119}
\DeclareMathSymbol{\vnu}{\mathord}{mathc}{"17}
\def\varnu{\text{\scalebox{1.05}{$\vnu$}} }
\DeclareSymbolFont{mathd}{OML}{ztmcm}{m}{it}
\DeclareMathSymbol{\varalpha}{\mathord}{mathd}{11}
\DeclareMathSymbol{\varlambda}{\mathord}{mathd}{21}
\DeclareMathSymbol{\depsilon}{\mathord}{mathd}{15}
\def\vepsilon{\upvarepsilon} 
\DeclareMathSymbol{\varchi}{\mathord}{mathd}{31}
\def\valpha{\text{\scalebox{0.88}[1.02]{$\alpha$}}} 
\newcommand{\BA}{{\mathbb {A}}} 
\newcommand{\BC}{{\mathbb {C}}}
\newcommand{\BQ}{{\mathbb {Q}}} \newcommand{\BR}{{\mathbb {R}}}
 \newcommand{\BZ}{{\mathbb {Z}}}
 \newcommand{\RN}{{\mathrm {N}}}
\newcommand{\GL}{{\mathrm {GL}}}
\newcommand{\PGL}{{\mathrm {PGL}}}
\newcommand{\SL}{{\mathrm {SL}}}
\newcommand{\sstyle}{\scriptstyle}
\newcommand{\ra}{\rightarrow} 
\def\nnmid{\hskip -1.5 pt \nmid \hskip -1.5 pt}
\def\fra{\mathfrak{a}}
\def\frn{\mathfrak{n}}
\def\-{^{-1}}
\def\nd{\mathrm{d}}
\def\Tr{\mathrm{Tr}}
\def\Fx{F^{\times}}
\def\sstimes {\scalebox{0.55}{$\times$}}
\renewcommand{\Re}{{\mathrm{Re} }}
\def\shskip{\hskip 0.5 pt}
\def\frO {\text{\raisebox{- 2 \depth}{\scalebox{1.1}{$ \text{\usefont{U}{BOONDOX-calo}{m}{n}O} \hskip 0.5pt $}}}}
\def\frp{\mathfrak{p}}
\def\frb{\mathfrak{b}}
\def\frd{\mathfrak{d}}
\def\frD{\mathfrak{D}}
\def\ree{\mathrm{e}}
\def\SO{\text{\raisebox{- 2 \depth}{\scalebox{1.1}{$ \text{\usefont{U}{BOONDOX-calo}{m}{n}O} \hskip 0.5pt $}}}}
\def\SDB{\text{\raisebox{- 2 \depth}{\scalebox{1.1}{$ \text{\usefont{U}{dutchcal}{m}{n}B}  $}}}}
\newcommand{\delete}[1]{}
\theoremstyle{plain}
\newtheorem{thm}{Theorem}[section] \newtheorem{cor}[thm]{Corollary}
\newtheorem{lem}[thm]{Lemma}  \newtheorem{prop}[thm]{Proposition}
 \newtheorem{defn}[thm]{Definition}
\newtheorem {rem}[thm]{Remark}
\newtheorem*{acknowledgement}{Acknowledgements}
\numberwithin{equation}{section}
\begin{document}
	
\title[A Vorono\"i--Oppenheim Summation Formula]{{\large A Vorono\"i--Oppenheim Summation Formula for Number Fields}}

\author{Zhi Qi}
\address{School of Mathematical Sciences\\ Zhejiang University\\Hangzhou, 310027\\China}
\email{zhi.qi@zju.edu.cn}

\thanks{The  author was supported by the National Natural Science Foundation of China (Grant No. 12071420).}

\keywords{Vorono\"i summation, Bessel functions, Eisenstein series.}
\subjclass[2010]{11F70}

\begin{abstract}
	In this note, we establish a Vorono\"i--Oppenheim summation formula for divisor functions over an arbitrary number field. 
\end{abstract} 

	\maketitle



\section{Introduction}

In 1904, Vorono\"i \cite{Voronoi} introduced his famous summation formula for the classical divisor function $\tau (n) $, whose smoothed form (see \cite[(1.5, 1.6)]{Templier-GL2} and \cite[\S 4.5]{IK}) reads as follows:
\begin{align}\label{app: Voronoi}
	\begin{aligned}
		\sum_{n=1}^{\infty} \tau (n) \varww (n) = & \int_0^{\infty} \varww (x) (\log x + 2 \gamma)  \nd x \\
		& + \sum_{n=1}^{\infty} \tau (n) \int_0^{\infty} \varww (x) \big(4 K_0 (4\pi \sqrt{n x }) -2 \pi Y_0  (4\pi \sqrt{n x }) \big)  \nd x, 
	\end{aligned}
\end{align}
for  $\varww (x) \in C_c^{\infty} (0, \infty)$, in which $\gamma$ is Euler's constant.

In 1927, Oppenheim \cite{Oppenheim} extended Vorono\"i's summation formula for  $$\tau_s (n) =  \sum_{ab=n} (a/b)^{s} = n^{-s} \sum_{d|n} d^{2s}, \qquad \text{($s\in \BC$)}, $$ as follows: 
\begin{align}\label{app: Oppenheim}
	\begin{aligned}
		\sum_{n=1}^{\infty} \tau_s (n) & \varww (n) =           \int_0^{\infty} \varww (x) \big({\zeta (1-2 s)} x^{- s} + {\zeta (1+2s)}    x^{s} \big)   \nd x  \\
		&\qquad \quad + \sum_{n=1}^{\infty} \tau_s (n) \int_0^{\infty} \varww (x) \big\{ {4 \cos (\pi s)}    K_{2 s} (4 \pi \sqrt {nx })   \\
		& \qquad \quad - 2\pi \big( \cos (\pi s) Y_{2s} (4 \pi \sqrt {nx }) + \sin  (\pi s) J_{2s} (4 \pi \sqrt {nx }) \big)  \big\}  \nd x.
	\end{aligned} 
\end{align}

In this note, we generalize the Vorono\"i--Oppenheim formula to an arbitrary number field. Actually,  our formula is even more general, with additive twists  included (see  \cite[\S 4.5]{IK})---this feature is usually important for applications\footnote{Our motivation of writing this note was the application of Vorono\"i over an imaginary quadratic field in \cite{Qi-Liu-Moments}, where it was proven that at least $33\%$ of central $L$-values for $\PGL_2 (\frO)$-Maass forms are non-vanishing (here $\frO$ is the ring of integers in an imaginary quadratic field).}. Our proof is inspired by the adelic  approach to the Vorono\"i summation formula for cusp forms   in  Cogdell \cite{Cogdell-Bessel} and Templier \cite{Templier-GL2}. In our setting, Eisenstein series are used instead of cusp forms.  For the archimedean vectors, we use the constructions in Beineke--Bump \cite{BB-VO1} and extend their  result on the Whittaker integral to complex places by a kernel formula for $\GL_2 (\BC)$ established in \cite{Qi-Bessel}. Recently,  the ideas in \cite{BB-VO1} were used in  \cite{BBT-VO} and \cite{BBB-VO} to establish a Vorono\"i--Oppenheim formula over   totally real number fields and the Gaussian field, but our adelic approach is  conceptually simpler while our formula is more general.

\subsection*{Notation and Definitions}

Let $F    $ be a number field. Let $\SO $,  $\mathfrak{D}$, and $ \BA  $ be its ring of integers,   different ideal, and     adele ring. Let $\mathrm{N}$ denote the norm for $F    $.

 For each place $v$ of $F    $, we denote by $F    _{v}$ the corresponding local field. When $v$ is non-archimedean,  let $\frp_{v}$ be the corresponding prime ideal of   $\SO $ and let $\mathrm{ord}_{v}$   denote the additive valuation.  Let $\| \hskip 3  pt \|_{v}$  denote the normalized modulus of $F_{v}$. We have $\| \hskip 3  pt \|_{v} = | \hskip 3.5 pt |$ if $F    _{v} = \BR$ and  $\| \hskip 3  pt \|_{v} = | \hskip 3.5 pt |^2$ if $F    _{v} = \BC$, where $| \hskip 3.5  pt |$ is the usual absolute value.

Let $S_{\infty}$ or $S_f$ denote the set of archimedean or non-archimedean  places of $F    $, respectively. Write $v | \infty$ and $v     \nnmid    \infty$ as the abbreviation for $v \in S_{\infty}$ and $v \in S_{f}$, respectively. For a finite set of places $S$,  
 denote by $\BA^S$, respectively $F    _{S}$, the sub-ring of adeles with trivial component above $S$, respectively above the complement of $S$. 
 For brevity, write $\BA_f = \BA^{S_{\infty}}$ and $F    _{\infty} = F    _{S_{\infty}}$. The modulus on $F    _{\infty}$  will be denoted by $\| \hskip 3  pt \|_{\infty}$. 

Let $\ree (z) = \exp (2\pi i z)$. Fix the (non-trivial) standard additive character $\psi = \otimes_{v} \psi_{v}$ on $\BA/F    $ as in \cite[\S XIV.1]{Lang-ANT} such that $\psi_{v} (x) = \ree (- x)$ if $F    _v = \BR$,   $\psi_{v} (z) = \ree (- (z + \widebar z))$ if $F    _v = \BC$, 
and that $\psi_{v}$ has conductor $\mathfrak{D}_{v}\-$ for any non-archimedean $F    _v$. We split $\psi = \psi_{\infty} \psi_f  $ so that $\psi_{\infty} (x) = \ree (- \Tr_{F_{\infty}} (x))$ ($x \in F_{\infty}$). For a finite set of places $S$, define  $\psi_S  = \prod_{v \shskip  \in S} \psi_{  v}  $ as an additive character of $F_{S}$.

We choose the Haar measure  $\nd x$ of $F    _{v}$ self-dual with respect to $\psi_{v}$ as in  \cite[\S XIV.1]{Lang-ANT}; the Haar measure is the ordinary Lebesgue measure on the real line if $F    _{v} = \BR$, and twice  the ordinary Lebesgue measure on the complex plane if $F    _{v} = \BC$. The measure $\nd x$ on $F_{\infty}$ is defined to be the product   of $ \nd x_{v} $ for $v |\infty$. 

In general, we use Gothic letters $\fra ,  \frb , \dots$ to denote {non-zero} fractional ideals of $F$, while we reserve  $\frn$ and $\frd$  for non-zero integral ideals of $F$. Let $\RN (\fra)$ denote the norm of $\fra$.

Let $\zeta_F (s)$ be the Dedekind $\zeta$ function for $F$:
\begin{align*}
	\zeta_F (s) = \sum_{\frn \shskip \subset \shskip \frO }  \frac { 1 } {\RN(\frn)^{  s} }, \qquad \mathrm{Re} (s) > 1. 
\end{align*}
It is well-known that  $\zeta_F (s)$ is a meromorphic function on the complex plane with a simple pole at $s=1$. Let  $\gamma^{(-1)}_F $ and $\gamma^{(0)}_F $ respectively be the residue and the constant term  of  $\zeta_F (s)$  at  $s=1$; namely, 
\begin{align}\label{2eq: zeta (s), s=1}
	\zeta_F (s) = \frac {\gamma^{(-1)}_F} {s-1} + \gamma^{(0)}_F + O (|s-1|), \qquad s \ra 1. 
\end{align}

\begin{defn}
	[Bessel kernel] \label{defn: Bessel kernel}
	
	Let  $s  \in   \BC $. 
	
	{\rm(1)} When  $F_v = \BR$, for $x  \in \BR_+$ we define 
	\begin{align*}
		 & B_{s} (x)   = \frac {\pi} {\sin (\pi s) } \big( J_{-2 s} (4 \pi \sqrt {x }) - J_{2 s} (4 \pi \sqrt {x }) \big) ,  \\
		 & B_{s} (-x )      = \frac {\pi} {\sin (\pi s) } \big( I_{-2 s} (4 \pi \sqrt {x }) - I_{2 s} (4 \pi \sqrt {x }) \big)  .
	\end{align*}
	
	{\rm(2)} When  $F_v = \BC$, for $z \in \BC^{\times}$ we define
	\begin{equation*}
		B_{s} (z ) =  \frac {2\pi^2} {\sin (2\pi s) } \big( { \textstyle  J_{-2 s} (4 \pi \sqrt {z}) J_{- 2s} (4 \pi \sqrt { \widebar z}) - J_{2 s} (4 \pi \sqrt {z}) J_{ 2s} (4 \pi \sqrt { \widebar z}) } \big). 
	\end{equation*}  
	
	
	For $x \in F_{\infty}^{\times}$  we define
	\begin{align*}
		\SDB_{s} (x ) = \prod_{ v | \infty } B_{s} (x_{v} ) .
	\end{align*}
\end{defn}	

It is understood that when $   s \in \BZ $ or $  2 s \in \BZ $ in {\rm(1)} or {\rm(2)} in Definition \ref{defn: Bessel kernel}, respectively,  the formulae above should be replaced by their limit. Alternatively, by \cite[3.54 (1), 3.7 (6)]{Watson}, we obtain the the expressions that arise in \eqref{app: Oppenheim}: 
\begin{align*}
& B_s (x)	  = - 2\pi \big( \cos (\pi s) Y_{2s} (4 \pi \sqrt {x }) + \sin  (\pi s) J_{2s} (4 \pi \sqrt {x }) \big) , \\
& B_s (-x)   = {4 \cos (\pi s)}    K_{2 s} (4 \pi \sqrt {x }) . 
\end{align*}

\begin{defn}[Hankel transform and Mellin transform]\label{defn: Hankel transform}
	Let	$ \mathscr{C}^{\infty}_c (F^{\times}_{\infty}) $ denote the space of compactly supported smooth functions $\varww : F^{\times}_{\infty} \ra \BC $ that are of the  product form $
	\varww (x)   = \allowbreak \prod_{v | \infty} \varww_{v} (x_{v})$. 
	
	Let $s  \in \BC $.	For $\varww (x) \in \mathscr{C}^{\infty}_c (F^{\times}_{\infty}) $  we define its Hankel transform $ \widetilde{\varww}_{s} (y) $ and Mellin transform $ \widetilde{\varww}_{s} (0) $  by
	\begin{align*}
		\widetilde{\varww}_{s} (y) =    \int_{F^{\sstimes}_{\scalebox{0.55}{$\infty$} } }  \varww (x) \SDB_{s}    ( x y)   \nd x, \quad \widetilde{\varww}_{s} (0) = \int_{F^{\sstimes}_{\scalebox{0.55}{$\infty$} } }  \varww (x) \|x\|_{\infty}^{s}    \nd x, \qquad  y \in F^{\times}_{\infty} .
	\end{align*} 
\end{defn}

\subsection*{Statement of Results}

Our main result is the following summation formula.\footnote{Edgar Assing informed the author that a general Vorono\"i--Oppenheim summation formula for twisted divisor sums can be obtained in a similar fashion by considering certain ramified Eisenstein series and using the local computations in his work \cite{Assing-Eisenstein}.}

\begin{thm}\label{prop: V-O} 
 	Let $ \zeta \in F$. For a non-zero fractional ideal $\fra $ define \begin{align}\label{1eq: S and b}
 	 S = \big\{ v \nnmid \infty : \mathrm{ord}_{v} (\zeta) < \mathrm{ord}_{v} (\fra) \big\}, \qquad \frb = \fra^{-1}  \prod_{v \shskip \in S}  \frp_{v}^{   2 \shskip \mathrm{ord}_{v} ( (1/\zeta) \fra)      } .   
 	\end{align}	 
 For $s \in \BC$  define  
	\begin{align}
		\tau_s (\frn ) = \RN (\frn)^{-s} \sum_{   \frd | \frn  }  \RN (\frd)^{ 2 s }, 
	\end{align}
and let $ \varww   (x) $, $\widetilde{\varww}_{s} (0)$, and $\widetilde{\varww}_{s} (y)$ be as in Definition {\rm\ref{defn: Hankel transform}}.
	  Then we have the identity
	\begin{align}\label{app: Voronoi, tau s} 
		\begin{aligned}
			&  	 \sum_{\gamma \shskip \in (\fra\frD)^{-1} \smallsetminus \{0\} }   \frac { \psi_{\infty} (  \gamma \zeta )      \tau_s (\gamma \fra \frD)  \varww   (\gamma )  } {\sqrt{\RN (\fra  ) }}     \\
		=	&   \sum_{\pm}   \frac{\RN (  \frD)^{\frac 1 2 \pm s}}{\RN ( \frb )^{\frac 1 2 \pm s}}   \zeta_F (1\pm 2s)  \widetilde{\varww}_{\pm s}  (0)    	+    \hskip -2 pt \sum_{\gamma \shskip \in  (\frb \frD)^{-1} \smallsetminus \{0\} }  \hskip -2 pt  \frac {\psi_S  (     \gamma / \zeta ) \tau_{s} (\gamma   \frb \frD ) \widetilde{\varww}_{s} (\gamma)} {{ \sqrt{\RN (\frb  ) } }} .
		\end{aligned}
	\end{align} 
\end{thm}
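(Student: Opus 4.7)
The plan is to realise \eqref{app: Voronoi, tau s} as the equality of two Fourier--Whittaker expansions of one and the same Eisenstein series on $\GL_2 (\BA)$, in the ad\`elic spirit of \cite{Cogdell-Bessel, Templier-GL2} but with an Eisenstein series in place of a cusp form. Starting from a factorisable Schwartz function $\Phi = \Phi_\infty \otimes \Phi_f \in \SS (\BA^2)$, one forms the Siegel--Tate Eisenstein series $E(g, s, \Phi)$ on $\GL_2 (\BA)$ attached to the induced character $\| \cdot \|^s \boxtimes \| \cdot \|^{-s}$ of the Borel. The finite component $\Phi_f$ is taken (up to a normalising volume) to be the characteristic function of $\fra \frD \times \SO$, which by a standard Tate computation forces the Whittaker factor at each $v \nnmid \infty$ to reproduce the local divisor function and, globally, to give the factor $\tau_s (\gamma \fra \frD) / \sqrt{\RN (\fra)}$ for $\gamma \in (\fra \frD)^{-1} \smallsetminus \{0\}$. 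The Archimedean component $\Phi_\infty$ is prescribed so that its local Whittaker integral at infinity equals the test function $\varww$; this is possible because $\varww \in \mathscr{C}_c^{\infty} (F_\infty^\times)$ has compact support bounded away from $0$ and $\infty$, so that the Mellin-type relation between $\Phi_\infty$ and $W_\infty$ is invertible.

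The first expansion evaluates $E$ at $h = n (\zeta) \mathrm{diag}(\alpha_0, 1)$, with $\alpha_0$ an id\`ele representing the ideal $\fra$, via the direct Fourier--Whittaker expansion along the upper unipotent,
\begin{align*}
E (h, s) = E^{(0)} (h, s) + \sum_{\gamma \shskip \in F^\times} \psi (\gamma \zeta) \shskip W (\mathrm{diag}(\gamma \alpha_0, 1), s) ,
\end{align*}
which, after substituting the factorisation $W = \prod_v W_v$, yields the left-hand side of \eqref{app: Voronoi, tau s}. The second expansion uses left $\GL_2 (F)$-invariance $E (h, s) = E (w h, s)$ for the standard Weyl element $w$; combined with the open-cell Bruhat decomposition $w \shskip n (\zeta) = n (-1/\zeta) \shskip \mathrm{diag}(-1/\zeta, -\zeta) \shskip \bar{n} (1/\zeta)$ (valid for $\zeta \neq 0$) and a second Fourier expansion, it produces a dual sum over $\gamma \in F^\times$. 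A non-Archimedean place-by-place analysis splits naturally into the two cases defining $S$ in \eqref{1eq: S and b}: for $v \nnmid \infty$ with $\mathrm{ord}_v (\zeta) \ge \mathrm{ord}_v (\fra)$ the local twist $\psi_v (\gamma / \zeta)$ is trivial on the support of the Whittaker factor and the local ideal reduces to $\fra_v^{-1}$, while for $v \in S$ the Weyl translation shifts the local conductor of $\Phi_{f,v}$ by $\frp_v^{2 \shskip \mathrm{ord}_v ((1/\zeta) \fra)}$ and leaves the character $\psi_v (\gamma/\zeta)$ behind --- this reproduces the definitions of $\frb$ and $\psi_S$ in \eqref{1eq: S and b} precisely. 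The two boundary terms $\RN (\frD)^{\frac 1 2 \pm s} \RN (\frb)^{-\frac 1 2 \mp s} \zeta_F (1 \pm 2 s) \widetilde \varww_{\pm s} (0)$ emerge from the constant term $E^{(0)}$ under the same Bruhat manipulation, via Tate's functional equation for $\zeta_F$ on each of the two lines $\| \cdot \|^{\pm s}$ of the induced representation.

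The main obstacle is the Archimedean local computation: one must identify the local Whittaker integral of the $w$-translate of $\Phi_\infty$ at $\mathrm{diag}(\gamma, 1)$ with the Hankel transform $\widetilde \varww_s (\gamma)$ against $\SDB_s$. For a real place this is precisely the calculation of Beineke--Bump \cite{BB-VO1}; for a complex place the product structure $J_{\pm 2 s} (4 \pi \sqrt z) \shskip J_{\pm 2 s} (4 \pi \sqrt{\widebar z})$ of the kernel $B_s (z)$ in Definition \ref{defn: Bessel kernel} has no direct real analogue, and one invokes the $\GL_2 (\BC)$ kernel formula of \cite{Qi-Bessel} to effect the identification. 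Once these local pieces are in hand, equating the two global expansions of $E (h, s)$ yields \eqref{app: Voronoi, tau s}.
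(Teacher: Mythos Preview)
Your approach is essentially the paper's: both realise \eqref{app: Voronoi, tau s} as the identity $E(n(\zeta)a(\valpha)) = E(w\,n(\zeta)a(\valpha))$ for an Eisenstein series in $\pi(s)$, with the non-Archimedean analysis splitting into the cases $v \in S$ and $v \notin S$ exactly as you describe, and the Archimedean Whittaker-to-Hankel identification handled via the Beineke--Bump construction at real places and the kernel formula of \cite{Qi-Bessel} at complex places. The paper works directly with flat sections $\phi \in V(s)$ --- spherical at finite places, and the explicit vector $\phi_{s,\varww}$ of \eqref{app: defn of phiw} at infinity --- rather than with a Godement--Jacquet Schwartz function; in your framing you encode $\fra$ twice, once through $\Phi_f$ as the characteristic function of $\fra\frD \times \SO$ and again through the evaluation point $\alpha_0$, which is redundant and would produce the wrong ideal. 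The paper keeps the finite data spherical and places $\fra$ only in the evaluation point $a(\valpha)$.

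One step you omit entirely: the Eisenstein series and the intertwining and Whittaker integrals converge only for $\Re(s) > \tfrac12$, so the argument as sketched yields \eqref{app: Voronoi, tau s} only in that half-plane. The paper then continues analytically to all $s \in \BC$ by verifying that both sides are entire: the left-hand side is a finite sum since $\varww$ has compact support and $(\fra\frD)^{-1}$ is a lattice in $F_\infty$, while for the right-hand side one needs uniform-in-$s$ bounds on averages of $\tau_s(\frn)$ and on the Hankel transform $\widetilde{\varww}_s(y)$, which the paper supplies in Lemmas \ref{lem: averages of tau s} and \ref{lem: bounds for Hankel, local}. This continuation step is not automatic and should be included.
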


By letting $\zeta = 0$ and $\fra = (1)$ in   \eqref{app: Voronoi, tau s}  (it is understood that if  $\zeta = 0$ then $S = \text{{\rm\O}}$, $\frb = (1)$, and $\psi_{\text
	{\O}} = 1$), we recover   the Vorono\"i--Oppenheim formula \eqref{app: Oppenheim} when $F = \BQ$ as well as its generalization in \cite{BBT-VO} when $F$ is totally real.  

Let $\tau (\frn) = \tau_0 (\frn)$ be the (usual) divisor function for $F$.   The following Vorono\"i summation formula is  the  formula \eqref{app: Voronoi, tau s}   in the special case $s = 0$ (see   \eqref{10eq: zeroth term s=0}). When $F = \BQ$, this is  the Vorono\"i summation formula in \cite[\S 4.5]{IK}. 

\begin{cor} \label{prop: Voronoi tau}	Let $\zeta$, $\fra$, $\frb$, $S$ be as in Theorem {\rm\ref{prop: V-O}}.	Let $ \varww   (x) $ and $\widetilde{\varww}_{0} (y)$ be as in Definition {\rm\ref{defn: Hankel transform}}. Define 
	\begin{align}\label{3eq: zeroth term}
		\widetilde{\varww}_{0} (0; \frb ) =	   \gamma^{(-1)}_F  \widetilde{\varww}_0' (0) + \big( 2 \gamma^{(0)}_F  -\gamma^{(-1)}_F  \log \RN \big(\frb\frD^{-1}  \big) \big) \widetilde{\varww}_0 (0),
	\end{align}
	where the constants $\gamma^{(-1)}_F $ and $\gamma^{(0)}_F $ are defined as in {\rm\eqref{2eq: zeta (s), s=1}}, and $ \widetilde{\varww}_0 (0)$ and  $ \widetilde{\varww}_0' (0)$ are the integrals
	\begin{align}\label{3eq: Mellin}
		\widetilde{\varww}_0 (0) =	 \int_{F^{\sstimes}_{\scalebox{0.55}{$\infty$} } }  \varww (x)   \nd x , \qquad \widetilde{\varww}_0' (0) =	 \int_{F^{\sstimes}_{\scalebox{0.55}{$\infty$} } }  \varww (x)    \log   \|x\|_{\infty}  \nd x . 
	\end{align}   Then we have the identity
	\begin{align}\label{app: Voronoi, tau} 
		\begin{aligned}
			 { \frac { 1  } { \sqrt{\RN (\fra  ) } } }	 \sum_{\gamma \shskip \in (\fra\frD)^{-1} \smallsetminus \{0\} }     \psi_{\infty} (  \gamma \zeta )      \tau  (\gamma \fra \frD) & \varww   (\gamma )        
			=	   \frac {\sqrt{\RN(\frD)}} {\sqrt{\RN (\frb )}} \widetilde{\varww}_{0} (0; \frb )   \\
			+  \frac 1 {\sqrt{\RN (\frb )}}  & \sum_{\gamma \shskip \in  (\frb \frD)^{-1} \smallsetminus \{0\} }  \hskip -2 pt  \psi_S  (     \gamma / \zeta ) \tau (\gamma   \frb \frD ) \widetilde{\varww}_{0} (\gamma)  .
		\end{aligned}
	\end{align}
	
\end{cor}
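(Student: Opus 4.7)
The plan is to deduce Corollary~\ref{prop: Voronoi tau} from Theorem~\ref{prop: V-O} by passing to the limit $s \to 0$. The left-hand side of \eqref{app: Voronoi, tau s} immediately specializes correctly: $\tau_0(\frn) = \tau(\frn)$ by definition, and the sum over $\gamma \in (\fra\frD)^{-1} \smallsetminus \{0\}$ is plainly a continuous (in fact holomorphic) function of $s$ for compactly supported $\varww$. Similarly, the dual sum on the right extends continuously to $s=0$: the Hankel transform $\widetilde{\varww}_s(\gamma)$ is entire in $s$ by the compact support of $\varww$ on $F_\infty^\times$, and specializing the explicit formulas for $B_s(\pm x)$ recorded after Definition~\ref{defn: Bessel kernel} gives $B_0(x) = -2\pi Y_0(4\pi\sqrt x)$ and $B_0(-x) = 4 K_0(4\pi\sqrt x)$, which are exactly the classical Vorono\"i kernels.

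The substantive step is to analyze the main term, since each of its two summands is individually singular at $s=0$. Write
\begin{align*}
M(s) = \sum_{\pm} \frac{\RN(\frD)^{\frac12 \pm s}}{\RN(\frb)^{\frac12 \pm s}} \zeta_F(1 \pm 2s) \widetilde{\varww}_{\pm s}(0).
\end{align*}
Set $A = \RN(\frD)/\RN(\frb)$, so the prefactor becomes $\sqrt{A}\cdot A^{\pm s}$. Insert the Laurent expansion \eqref{2eq: zeta (s), s=1}, which gives $\zeta_F(1 \pm 2s) = \pm \gamma_F^{(-1)}/(2s) + \gamma_F^{(0)} + O(s)$, together with the Taylor expansions $A^{\pm s} = 1 \pm s \log A + O(s^2)$ and $\widetilde{\varww}_{\pm s}(0) = \widetilde{\varww}_0(0) \pm s\, \widetilde{\varww}_0'(0) + O(s^2)$, where $\widetilde{\varww}_0'(0)$ is obtained by differentiating the Mellin integral in Definition~\ref{defn: Hankel transform} under the integral sign and agrees with \eqref{3eq: Mellin}. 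Multiply the three factors in each summand; the $1/s$ poles have opposite signs and cancel.

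Collecting the $s^0$ coefficients, the contributions of the two summands to $M(0)/\sqrt{A}$ are
\begin{align*}
\tfrac12\gamma_F^{(-1)} \log A \cdot \widetilde{\varww}_0(0) + \gamma_F^{(0)} \widetilde{\varww}_0(0) + \tfrac12 \gamma_F^{(-1)} \widetilde{\varww}_0'(0)
\end{align*}
with $+$ sign from the $+$ branch and with $\log A$ and $\widetilde{\varww}_0'(0)$ appearing with \emph{identical} signs on the $-$ branch (the two minus signs coming from $A^{-s}$ and from the residue cancel one another, and likewise for the derivative term). Summing and using $\log A = -\log \RN(\frb\frD^{-1})$ yields
\begin{align*}
M(0) = \sqrt{A}\,\bigl[\gamma_F^{(-1)} \widetilde{\varww}_0'(0) + \bigl(2\gamma_F^{(0)} - \gamma_F^{(-1)} \log \RN(\frb\frD^{-1})\bigr)\widetilde{\varww}_0(0)\bigr] = \frac{\sqrt{\RN(\frD)}}{\sqrt{\RN(\frb)}}\,\widetilde{\varww}_0(0;\frb),
\end{align*}
which is precisely the main term in \eqref{app: Voronoi, tau}. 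There is no real obstacle here: the only care needed is bookkeeping in the Laurent expansion, and the convergence/holomorphy used to pass to the limit term-by-term in the dual sum follows from the rapid decay of the Bessel kernel together with the compact support of $\varww$, both already invoked in the proof of Theorem~\ref{prop: V-O}.
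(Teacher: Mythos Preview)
Your proof is correct and follows essentially the same approach as the paper: the paper simply remarks that Corollary~\ref{prop: Voronoi tau} is the case $s=0$ of Theorem~\ref{prop: V-O}, pointing to \eqref{10eq: zeroth term s=0} for the value of the main term at $s=0$, while you spell out the Laurent-expansion bookkeeping that yields this value. The holomorphy in $s$ of both sides, which you invoke to justify the limit, is exactly what the paper establishes in the analytic-continuation step.
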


\begin{rem}
	With some efforts, one may prove that the above formulae are valid for any $\varww : F^{\times}_{\infty} \ra \BC $ with compact support. Note that for such $\varww$ the integral transforms in Definition {\rm\ref{defn: Hankel transform}} are still well-defined. 
\end{rem}

\begin{acknowledgement}
	The author  thanks  Edgar Assing and the referee for their helpful	comments.
\end{acknowledgement}

\section{Review of Eisenstein Series} 

In this section, we recollect some basic facts on Eisenstein series. The reader is referred to  \cite[\S 3.7]{Bump} for more details.

Define the parabolic subgroup
\begin{align*}  P = \left\{  \begin{pmatrix}
		x & r \\ & y
	\end{pmatrix}\right\} \subset \GL_2.
\end{align*}

 For $s \in \BC$  let $ \pi (s) $ be the representation of $\GL_2 (\BA)$ obtained by (normalized) parabolic induction of the following character of $P (\BA)$:
\begin{align*}
	\begin{pmatrix}
		x & r \\ & y 
	\end{pmatrix} \ra \|x/y\|^{s+\frac 1 2} .
\end{align*} 
To be precise, the space $V (s)$ of this representation consists of all smooth functions $\phi$ on $\GL_2 (\BA)$ that satisfy
\begin{align*}
	\phi (\begin{pmatrix}
		x & r \\ & y 
	\end{pmatrix} g ) = \|x / y\|^{s +   \frac 1 2} \phi (g) ,
\end{align*}
on which the action of $\GL_2 (\BA)$ is by right translation. 

For $\Re  ( s) > \frac 1 2$ define the Eisenstein series $E (g; \phi)$ associated with $\phi \in V (s)$ by 
\begin{align*}
	E (g; \phi) = \sum_{ \gamma \shskip \in P(F) \backslash \GL_2 (F) } \phi  (\gamma g), \qquad g \in \GL_2 (\BA); 
\end{align*}
the series is absolutely convergent. 
The Fourier--Whittaker expansion of $ E (g; \phi)  $ is given by
\begin{align}\label{2eq: Fourier exp Eisenstein}
	E (g; \phi) = \phi (g) +  M \phi  (g) + \sum_{\gamma \shskip \in F^{\times}} W_\phi (a (\gamma) g) , 
\end{align}
where 
\begin{align}\label{app: intertwining, A}
	M  \phi  (g) = \int_{ \BA } \phi (w \shskip n(r) g )   \nd r , \qquad \phi \in V (s), 
\end{align}
is the intertwining integral, 
and
\begin{align}\label{app: Whittaker function, A}
	W_\phi (g) =  \int_{\BA} \phi (w \shskip n(r) g ) \overline{\psi ( r)} \nd r , \qquad \phi \in V (s), 
\end{align}
is the Whittaker integral, 
with 
\begin{align*}
	a (x) = \begin{pmatrix}
		x & \\ & 1
	\end{pmatrix}, \qquad n (r) = \begin{pmatrix}
		1 & r \\
		& 1
	\end{pmatrix}, \qquad w = \begin{pmatrix}
		& \hskip -2pt -1 \\ 
		1 & 
	\end{pmatrix}. 
\end{align*}
It is known that the (global) integrals in \eqref{app: intertwining, A} and \eqref{app: Whittaker function, A} are   converge for $\mathrm{Re} (s) > \frac 1 2$ and have analytic continuation onto the whole complex plane, except for a simple pole at $s = \frac 1 2$ that occurs in the case of $M \phi (g)$. 
Moreover, we have $ M:  V(s) \ra V(-s) $, and  $W_{\phi} (n (r) g) = \psi (r) W_{\phi} (g)$.

\section{An Adelic Identity} 

The starting point of our approach  is the following identity, which is the analogue of \cite[Theorem 3.1]{Templier-GL2} in the case of Eisenstein series.

\begin{lem}\label{lem: E = Ew}
	Let $\zeta    \in \BA_f  $ and $\valpha \in \BA_f^{\times}$.  For $\phi \in V (s)$ we have 
	\begin{align}\label{app: basic identity}
		\begin{aligned}
			\phi (a (\valpha))  +   M & \phi  (a (\valpha)) +   \sum_{\gamma \shskip \in F^{\times}} \psi_f (\gamma \zeta   ) W_\phi (a (\valpha \gamma) ) = \\
			 &   \phi  (w n (\zeta   ) a (\valpha) ) +  M \phi  (w n (\zeta   ) a (\valpha)) + \sum_{\gamma \shskip \in F^{\times}}   W_\phi (a (\gamma) w n (\zeta   ) a (\valpha) ) .
		\end{aligned}
	\end{align}
\end{lem}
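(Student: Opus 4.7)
The plan is to read both sides of \eqref{app: basic identity} as the Fourier--Whittaker expansion \eqref{2eq: Fourier exp Eisenstein} of $E(\,\cdot\,;\phi)$ evaluated at two different adelic points, and then to use the automorphy of the Eisenstein series to match them.

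First I will observe that the right-hand side of \eqref{app: basic identity} is, by inspection, exactly the expansion \eqref{2eq: Fourier exp Eisenstein} of $E(w n(\zeta) a(\valpha);\phi)$. For the left-hand side, I will apply \eqref{2eq: Fourier exp Eisenstein} with $g = n(\zeta) a(\valpha)$ and simplify the three pieces. Since $n(\zeta)$ lies in the unipotent radical of $P(\BA)$, on which the inducing character is trivial, the left-$P(\BA)$-equivariance of $\phi$ (and of $M\phi \in V(-s)$) gives
\begin{align*}
\phi(n(\zeta) a(\valpha)) = \phi(a(\valpha)), \qquad M\phi(n(\zeta) a(\valpha)) = M\phi(a(\valpha)).
\end{align*}
For each Whittaker term, the matrix identity $a(\gamma) n(\zeta) = n(\gamma \zeta) a(\gamma)$ combined with the quasi-periodicity $W_\phi(n(r) g) = \psi(r) W_\phi(g)$ recorded after \eqref{app: Whittaker function, A} yields
\begin{align*}
W_\phi(a(\gamma) n(\zeta) a(\valpha)) = \psi(\gamma \zeta)\, W_\phi(a(\gamma \valpha));
\end{align*}
moreover, since $\zeta \in \BA_f$ has trivial Archimedean components, $\psi(\gamma \zeta) = \psi_f(\gamma \zeta)$. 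Hence the left-hand side of \eqref{app: basic identity} is precisely $E(n(\zeta) a(\valpha); \phi)$.

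It remains to observe that $w \in \GL_2(F)$, so the left-$\GL_2(F)$-invariance of the Eisenstein series furnishes $E(w n(\zeta) a(\valpha); \phi) = E(n(\zeta) a(\valpha); \phi)$, which is the asserted identity. The only mild obstacle is convergence: \eqref{2eq: Fourier exp Eisenstein} is justified a priori only for $\Re s > 1/2$, so I will first establish the identity there and then invoke meromorphic continuation of $E$ (and of $M\phi$, $W_\phi$, each rational in the induction parameter) to extend it to the values of $s$ actually needed in the proof of Theorem \ref{prop: V-O}.
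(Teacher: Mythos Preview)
Your proof is correct and follows exactly the same approach as the paper: both identify the two sides as $E(n(\zeta)a(\valpha);\phi)$ and $E(wn(\zeta)a(\valpha);\phi)$ via the Fourier expansion \eqref{2eq: Fourier exp Eisenstein}, and then invoke the left $\GL_2(F)$-invariance of the Eisenstein series at $w$. You supply more detail than the paper in verifying that the left-hand side is $E(n(\zeta)a(\valpha);\phi)$, and your closing remark on meromorphic continuation is unnecessary here since the paper only applies the lemma for $\Re(s)>1/2$ and handles analytic continuation separately, but none of this affects correctness.
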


\begin{proof}
	In view of \eqref{2eq: Fourier exp Eisenstein}, it is easily seen that the left-hand side is   $E (n (\zeta   ) a (\valpha) ; \phi)$ while the right-hand side is $E (w n (\zeta   ) a (\valpha); \phi)$. Since $E (g; \phi)$ is left $\GL_2 (F)$-invariant, the identity   follows because $w \in \GL_2 (F)$. 
\end{proof} 

We now assume that $\phi \in V (s)$ is factorizable as $\prod_{v} \phi_{v}$; it is clear that $M \phi$ and $W_{\phi}$ are also factorizable. More precisely, for each $v$,  $\phi_{v}$ is a smooth function on $\GL_2 (F_{v})$ such that 
\begin{align*}
	\phi_{v} (\begin{pmatrix}
		x & r \\ & y 
	\end{pmatrix} g ) = \|x / y\|_{v}^{s +   \frac 1 2} \phi_{v} (g), \qquad x, y \in F_{v}^{\times}, \, r \in F_{v}, \, g \in \GL_2 (F_{v}) ;
\end{align*} by convention, let $V_{v}(s)$ denote the space of such $\phi_{v}$ with the above property.   Put $\phi_{\infty} = \prod_{v | \infty} \phi_{v}$, $M \phi_{\infty} = \prod_{v | \infty} M \phi_{v}$, and $ W_{\phi_\infty} = \prod_{v | \infty} W_{\phi_{v}}  $.

Choose $\valpha \in \BA_f^{\times} $ so that $\mathrm{ord}_v (\valpha_v) = \mathrm{ord}_v (\fra)$ for every $v \nmid \infty$. Define \begin{equation}\label{3eq: defn of b}
	S = \big\{ v \nnmid \infty : \mathrm{ord}_{v} (\zeta) < \mathrm{ord}_{v} (\valpha) \big\}, \qquad  \frb =   \prod  \frp_{v}^{  \max  \left \{   \mathrm{ord}_{v} (1/\valpha ), \, \mathrm{ord}_{v} (\valpha/\zeta^2)   \right \}  }.  
\end{equation}
For every $v \nnmid \infty$,   choose $\phi_{v} = \phi_{s, \shskip   v}$ to be the canonical spherical vector in $V_{v} (s)$ with $ \phi_{s, \shskip  v} (k) =1 $ for all $k \in \GL_2(\SO_{v})$; namely,
\begin{align*}
	\phi_{s, v} (\begin{pmatrix}
		x & r \\ & y 
	\end{pmatrix} k ) = \|x / y\|_{v}^{s +  \frac 1   2} , \qquad x, y \in F_{v}^{\times}, \, r \in F_{v}, \, k \in \GL_2 (\SO_{v}) . 
\end{align*}  The local integrals $ M \phi_{v} $  and  $ W_{\phi_{v}}$ are very explicit in the spherical case (see for example  \cite[\S 4.6]{Bump}\footnote{A subtle issue is that the results in \cite[\S 4.6]{Bump} are proven for $\psi_{ v }$ of conductor $\frO_{v}$, but this may be easily addressed by re-scaling the character and the Haar measure.}).   Globally, if we define 
\begin{align}
	c_{s} (0) = \frac {  \zeta_{F} (2s)  } {\sqrt{\RN (  \frD )} \zeta_{F} (1+2s ) }, \quad c_s (\frn) = \frac {   \tau_s (\frn) / \sqrt{\RN (\frn)}  } {  \RN (\frD)^{s }  \zeta_F(1+2s) } ,
\end{align} 
then
\begin{align*}
  \phi_f (a (\valpha)) = \frac {1} {\RN(\fra)^{   \frac 1 2 +s }} , 
\end{align*}
by definition, and 
\begin{align*}
  M  \phi_f  (a (\valpha)) =  \frac {   c_{s} (0) } { \RN(\fra)^{   \frac 1 2 -s  }  }, \qquad W_{\phi_{f}} (a ( \valpha \gamma) )  = c_s (\gamma \fra \frD), 
\end{align*}
by Proposition 4.6.7 and Theorem 4.6.5 in \cite{Bump}.  Consequently,  the left-hand side of \eqref{app: basic identity} is equal to 
\begin{align}\label{app: LHS}
	\frac {\phi_{\infty}  (1_2)} {\RN(\fra)^{   \frac 1 2 +s }}   +  c_{s} (0) \frac {  M \phi_{\infty}  (1_2)} { \RN(\fra)^{   \frac 1 2 -s  }  }    + \sum_{\gamma \shskip \in (\fra\frD)^{-1} \smallsetminus \{0\} }    { \psi_f ( \gamma \zeta ) c_s (\gamma \fra \frD) W_{\phi_{\infty}} (a (  \gamma) ) }   .
\end{align}
Next, we compute the right-hand side of  \eqref{app: basic identity}.  Keep in mind that $\phi_{v}$,  $M \phi_{v}$, and   $W_{\phi_{v}}$ are right $\GL_2 (\SO_{v})$-invariant. Recall that $S$ is defined in \eqref{3eq: defn of b}. When $v \in S_f \smallsetminus S$ so that $\|\zeta    / \valpha\|_{v} \leqslant 1 $, we have the Iwasawa decomposition 
\begin{align*}
	a(\gamma) w n   (\zeta   _{v}) a(\valpha_{v}) = \begin{pmatrix}
		\gamma  &  \\  &   \valpha_{v}
	\end{pmatrix}   
	\begin{pmatrix}
		&  -1 \\  1 &  \zeta   _{v} / \valpha_{v}
	\end{pmatrix} ,
\end{align*}  
and hence
\begin{align*}
	& \phi_{v}  (w n (\zeta   _{v}) a (\valpha_{v}) ) = \|1/\valpha\|_{v}^{ \frac 1 2+s}, \quad M \phi_{v}  (w n (\zeta   _{v}) a (\valpha_{v}) ) = \|1/\valpha\|_{v}^{\frac 1 2-s} M \phi_{v} (1_2),  
\end{align*}
\begin{align*}
	W_{\phi_{v}} (a (\gamma) w n (\zeta   _{v}) a (\valpha_{v}) ) = W_{\phi_{v}} (a (\gamma /  \valpha_{v}) ) . 
\end{align*}
When $v \in S$ so that $\|\zeta    / \valpha\|_{v} > 1 $, we have  the Iwasawa decomposition  
\begin{align*}
	a (\gamma ) w n(\zeta   _{v}) a(\valpha_{v})  =  \begin{pmatrix}
		1 & - \gamma / \zeta   _{v}  \\ & 1
	\end{pmatrix} 
	\begin{pmatrix}
		\gamma \valpha_{v} / \zeta   _{v}  & \\ &   \zeta   _{v} 
	\end{pmatrix} 
	\begin{pmatrix}
		1 & \\  \valpha_{v} / \zeta   _{v}  & 1
	\end{pmatrix}  ,
\end{align*}
and hence
\begin{align*}
	& \phi_{v}  (w n (\zeta   _{v}) a (\valpha_{v}) ) = \big\|\valpha / \zeta   ^2 \big\|_{v}^{\frac 1 2+s}, \quad M \phi_{v}  (w n (\zeta   _{v}) a (\valpha_{v}) ) = \big\|\valpha / \zeta   ^2 \big\|_{v}^{\frac 1 2-s} M \phi_{v} (1_2),  
\end{align*}
\begin{align*}
	W_{\phi_{v}} (a (\gamma) w n (\zeta   _{v}) a (\valpha_{v}) ) = \psi_{v} (-\gamma  /\zeta   _{v}) W_{\phi_{v}} \big( a \big(\gamma    \valpha_{v} / \zeta   _{v}^2 \big) \big) . 
\end{align*} 
In view of the definition of  $\frb$ in \eqref{3eq: defn of b}, it readily follows that
\begin{align*}
	\phi_f  (w n (\zeta   ) a (\valpha) ) = \frac 1 {\RN (\frb)^{\frac 1 2 + s}} ,
\end{align*}
and 
\begin{align*}
	M \phi_f  (w n (\zeta   ) a (\valpha)) = \frac {c(0)} {\RN (\frb)^{\frac 1 2 - s}}, \quad W_{\phi_f} (a (\gamma) w n (\zeta   ) a (\valpha) ) =  \psi_S  ( -   \gamma / \zeta ) c_{s} (\gamma   \frb \frD ). 
	\end{align*}
Therefore,   the    right-hand side of  \eqref{app: basic identity} is equal to
\begin{align}\label{app: RHS}
	\begin{aligned}
		\frac {\phi_{\infty} (w) } {\RN  (  \frb)^{\frac 1 2+s}  }  & + c_s (0)  \frac {  M \phi_{\infty} (w)} { \RN  (\frb )^{\frac 1 2- s} }   +  \sum_{\gamma \shskip \in  (\frb \frD)^{-1} \smallsetminus \{0\} }   {\psi_S  ( -   \gamma / \zeta ) c_{s} (\gamma   \frb \frD ) W_{\phi_{\infty}} (a (  \gamma) w ) }  .
	\end{aligned}
\end{align}  
Lemma \ref{lem: E = Ew} says that \eqref{app: LHS} and \eqref{app: RHS} are equal to each other.

For each  $v | \infty$,  we will make the choice of $\phi_{v}$ later in \S \ref{app: choice}.

\section{Archimedean Kirillov Model}  In this section, we will work exclusively on an archimedean local field $F_{v}   $. For simplicity, the place $v$ will be suppressed from our notation. Accordingly, let $F    $ be either $\BR$ or $\BC$. 
Let  $\psi (x) = \ree (- \Tr_{F   } (x))$, and $\nd x$ be the corresponding self-dual Haar measure on $F    $. Let $\| \phantom{i}  \| $  denote the standard modulus of $F    $. 
For   $s \in \BC$, let $V (s)$ be the space of {smooth} functions on $\GL_2 (F)$ that satisfy
\begin{align}\label{app: defn of V(s)}
	\phi (\begin{pmatrix}
		x & r \\ & y 
	\end{pmatrix} g ) = \|x / y\|^{s +   \frac 1 2} \phi (g),
\end{align} 
and let $\pi (s)$ denote the representation of  $\GL_2 (F)$ that acts on $V (s)$ by right translation. 

For simplicity, we assume that  $2 s \notin  \BZ \smallsetminus 2 \BZ$ or $2 s \notin \BZ \smallsetminus \{0\}$ according as $F$ is real or complex, so that $\pi (s)$ is irreducible.  

For $\Re (s) > 0$,   the Whittaker functional $L$ on $V(s)$ is defined by
\begin{align*}
	L (\phi) = \int_{F} \phi (w \shskip n(r)  ) \overline{\psi ( r)} \nd r, 
\end{align*}
in which  the integral  is convergent for $\Re (s) > 0$ (see \cite{Godement}). The Whittaker function $W_\phi  $ associated to $\phi \in V (s)$ is 
\begin{align}\label{app: Whittaker function}
	W_\phi (g) = L (\pi (g) \phi) = \int_{F} \phi (w \shskip n(r)  g ) \overline{\psi ( r)} \nd r . 
\end{align}
By definition, the Kirillov model $\mathscr{K}  (\pi (s))$ comprises all the functions
$   W_\phi (a(x))$   \text{($x \in F^{\times}$)}.     
It is known that $C_c^{\infty} (F^{\times}) \subset \mathscr{K}  (\pi (s))$ (see \cite[Lemma 5.1]{Ichino-Templier}). Moreover, we define the intertwining operator $ M  : V (s) \ra V (-s)$ by the integral
\begin{align}\label{app: intertwining}
	M   \phi  (g) = \int_{ F } \phi (w \shskip n(r) g )   \nd r .
\end{align}
Again, this integral is convergent for $\Re (s) > 0$. It is known that both the Whittaker integral
and the intertwining integral in \eqref{app: Whittaker function} and \eqref{app: intertwining}  have meromorphic continuation to the entire $s$
plane, but we will not need this fact since  $\Re (s) > \frac 1 2$ will be assumed.

\vskip 5 pt

\subsection{A Kernel Formula} We have the following kernel formula for the action of the Weyl element $w$ on the Kirillov 
model $ \mathscr{K}  (\pi (s)) $ as the  Hankel integral transform with   Bessel kernel $B_s$.  

\begin{prop}\label{lem: kernel formula} For $ W_\phi (a(x)) \in C_c^{\infty} (F^{\times})$, we have
	\begin{align}\label{app: pi(w)}
		W_\phi (a(y) w) 
		=   \int_{F    ^{\times}}  W_\phi (a(x))     B_{s} (xy) {\textstyle \sqrt{\|y / x\|}} \nd  x,  
	\end{align}
	where $B_s (x)$ is the Bessel kernel associated to $\pi (s)$ as in Definition {\rm\ref{defn: Bessel kernel}}. 
\end{prop}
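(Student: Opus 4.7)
My plan is to verify \eqref{app: pi(w)} by taking Mellin transforms in $y$ on both sides and matching the resulting multiplier with the local archimedean gamma factor of $\pi(s)$. Since $W_\phi(a(\cdot)) \in C_c^\infty(F^\times)$, its Mellin transform $\widehat W(s') = \int_{F^\times} W_\phi(a(x))\|x\|^{s'}\, d^\times x$ is entire of rapid decay on vertical lines, so Mellin inversion will apply throughout and all manipulations can be justified by analytic continuation in $s'$.

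First I would appeal to the local functional equation for the Kirillov model of the principal series $\pi(s)$---a consequence of the meromorphic continuation of the Whittaker and intertwining integrals \eqref{app: Whittaker function} and \eqref{app: intertwining}, together with the multiplicity-one property guaranteed by the irreducibility hypothesis on $s$---to obtain
\[
\int_{F^\times} W_\phi(a(y)w)\|y\|^{s'}\, d^\times y \;=\; \gamma_s(s')\,\widehat W(-s'),
\]
where $\gamma_s(s')$ is an explicit product of $\Gamma$-functions in the variables $\pm s \pm s'$ attached to $\pi(s)$. Next, the substitution $x \mapsto x/y$ and Fubini applied to the right-hand side of \eqref{app: pi(w)} factor its Mellin transform as $\widehat W(-s')$ times the (formal) Mellin transform of $B_s$. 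The proposition thus reduces to the Mellin--Barnes identity
\[
\int_{F^\times} B_s(x)\|x\|^{s'}\, d^\times x \;=\; \gamma_s(s'),
\]
interpreted in the sense of analytic continuation.

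For $F = \BR$ this is precisely the computation carried out in Beineke--Bump \cite{BB-VO1}: the contributions from $x > 0$ and $x < 0$ yield the classical Mellin transforms of the $J$- and $I$-Bessel functions, which assemble into the real archimedean gamma factor of $\pi(s)$. The hard part will be the case $F = \BC$, where $B_s(z)$ is a \emph{product} of $J$-Bessel functions in $\sqrt z$ and $\sqrt{\widebar z}$ and its Mellin transform over $\BC^\times$ is a double Mellin--Barnes integral not treated by the standard tables. Here I would invoke the explicit Bessel-kernel formula for $\GL_2(\BC)$ established in \cite{Qi-Bessel}, which supplies exactly this double transform and identifies it with the complex archimedean gamma factor for $\pi(s)$. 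A final application of Mellin inversion then recovers \eqref{app: pi(w)}.
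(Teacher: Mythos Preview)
Your Mellin-transform strategy is the standard and correct route to such kernel formulas, and it coincides with the paper's approach: the paper's entire proof of this proposition is a one-sentence citation to Proposition~3.14, 3.17, Remark~17.6, and (18.1)--(18.4) of \cite{Qi-Bessel}, which carry out exactly the Mellin/local-functional-equation argument you outline. Since your proposal also defers the decisive complex computation to \cite{Qi-Bessel}, the two are substantively identical; one minor point is that the real case is more naturally attributed to \cite{CPS} or \cite{Cogdell-Bessel} than to \cite{BB-VO1}.
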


\begin{proof}
	For the formula \eqref{app: pi(w)} in our case of $\pi (s)$, which is not unitary in general, we refer to Proposition 3.14, 3.17, Remark 17.6, and (18.1)--(18.4) in \cite{Qi-Bessel}.  
\end{proof}

The occurrence of Bessel functions in the representation theory of  $\SL_2 (\BR)$ may be traced back to the books of Gel$'$fand, Graev, and Piatetski-Shapiro \cite{GG-PS}, and Vilenkin \cite{Vilenkin-Special-Functions}.  We refer the reader to  \cite[\S \S 6, 8]{CPS}, \cite[Appendix 2]{BaruchMao-Real}, and \cite[\S \S 17, 18]{Qi-Bessel}  for the kernel formulae for unitary representations of $\GL_2 (\BR)$ and  $\GL_2 (\BC)$ (see also \cite{Mo-Kernel,Baruch-Hankel-DS,B-Mo-Kernel2,Baruch-Kernel} for $\SL_2 (\BR)$ and  $\SL_2 (\BC)$ (in special cases)).   For its applications in establishing the Kuznetsov formula and the Waldspurger formula, see  {\rm\cite{CPS,Qi-Kuz,BaruchMao-Real,Chai-Qi-Bessel,BaruchMao-Global,Chai-Qi-Wald}}.

For any infinite dimensional admissible representation of $\GL_2 (\BR)$ or $\GL_2 (\BC)$, it follows from  the Casselman--Wallach completion theorem {\rm(}see {\rm\cite{Casselman-CW,Wallach-CW}} and Chapter {\rm 11} in {\rm\cite{RRG-II})} that, after dividing $ \hskip -1 pt \sqrt{\|x\|} $,  the Kirillov model $\mathscr{K}$ is exactly the $\mathscr{S}_{\mathrm{sis}} $-space as defined in {\rm\cite{Qi-Bessel}} {\rm(}see also {\rm\cite{Miller-Schmid-2004})}. It should be stressed that for the unitary case,  the kernel formula is actually valid for all  $ W_\phi (a(x))   $ in  the Kirillov model. However, this is not necessarily true in general, and the action of $w$ needs to be interpreted in terms of $\GL_2 \times \GL_1$ local functional equations.

Finally, we conclude this sub-section with some discussions on the various proofs of this kernel formula in the literature. The case of $\GL_2 (\BR)$ or $\SL_2 (\BR)$ is relatively easier, and there are three proofs 
  in \cite[\S 8]{CPS},  \cite{Mo-Kernel}, and \cite[Appendix 2]{BaruchMao-Real}. The methods of the latter two proofs were generalized to  $\SL_2 (\BC)$ in  \cite{B-Mo-Kernel2}  and \cite{Baruch-Kernel}. However,  certain conditions are required due to some  convergence issues. In \cite{B-Mo-Kernel2}, an integral representation of the Bessel function is used but it is valid only for  $ |\Re (s) | < \frac 1 8$. In \cite{Baruch-Kernel}, it requires that $\Re (s)  \neq 0$ and hence the case of unitary principal series is excluded.\footnote{It should be noted that our parametrization is slightly different from theirs.} The approach in \cite{Qi-Bessel} is quite different and works without any condition. It is based on  the  sophisticated harmonic analysis for the Mellin transforms on $\mathscr{S}_{\mathrm{sis}} $-spaces (see \cite[\S 1--3]{Qi-Bessel}). Also the ideas in  \cite[\S 8]{CPS} are followed and generalized  in \cite[\S 17]{Qi-Bessel} to $\GL_n (\BR)$ and $\GL_n (\BC)$.

\vskip 5 pt

\subsection{Choice of Archimedean Vectors}\label{app: choice} Let $\varww \in C_c^{\infty} (F^{\times})$.  We define the function $\phi_{s, \varww}$ by 
\begin{align}\label{app: defn of phiw}
	\phi_{s, \varww} (g) = \left\{ \begin{aligned} \displaystyle &  \| x/y\|^{s+\frac 1 2} \int_{ F } \varww ( v  ) \|v \|^{s}   {\psi (  r  v)} \nd \shskip v
		, & & \text{ if } g = \begin{pmatrix}
			x &  u \\ & y
		\end{pmatrix} w \begin{pmatrix}
			1 & r \\ & 1
		\end{pmatrix}, \\
		& 0    , & & \text{ if } g = \begin{pmatrix}
			x & u \\ & y
		\end{pmatrix} .
	\end{aligned}\right. 
\end{align}

\begin{lem}
	We have $\phi_{s, \varww} \in V (s)$, that is,  $\phi_{s, \varww}$ is smooth and satisfies {\rm\eqref{app: defn of V(s)}}. 
\end{lem}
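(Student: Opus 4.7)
The proof requires verifying two properties of $\phi_{s,\varww}$: the transformation law \eqref{app: defn of V(s)} under the left action of $P(F)$, and smoothness on all of $\GL_2(F)$. The function is defined by cases on the Bruhat decomposition $\GL_2(F) = P(F) \sqcup P(F)\,w\,N(F)$, so the verification splits accordingly.

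The transformation law is straightforward. On the small cell $P(F)$, both $g$ and $pg$ (for $p \in P(F)$) lie in $P(F)$, so $\phi_{s,\varww}$ vanishes on both sides. On the big cell, if $g = p'wn(r)$, then $pg = (pp')wn(r)$, so the Bruhat coordinate $r$ is preserved and the diagonal ratio $x/y$ of the $P$-part is multiplied by the ratio of the diagonal of $p$. The factor $\|x/y\|^{s+\frac 12}$ appearing in \eqref{app: defn of phiw} therefore produces exactly the prefactor required by \eqref{app: defn of V(s)}.

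For smoothness on the open big cell, I would observe that the Bruhat coordinates $(x,u,y,r)$ are smooth rational functions of the matrix entries $(a,b,c,d)$ of $g$ wherever $c \neq 0$ (explicitly $y=c$, $r=d/c$, $x=-(ad-bc)/c$, $u=a$ up to signs), and that the map $r \mapsto \int_F \varww(v)\|v\|^s \psi(rv)\,\nd v$ is smooth because it is the Fourier transform of $v \mapsto \varww(v)\|v\|^s$, a function lying in $C_c^{\infty}(F^\times) \subset C_c^{\infty}(F)$ (since $\varww$ is compactly supported away from zero). In particular, this Fourier transform is a Schwartz function of $r$.

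The real obstacle is smoothness across the closed small cell $P(F)$, where $\phi_{s,\varww}$ is defined to vanish. Any point of the small cell is approached from the big cell with $c \to 0$ and $d$ fixed nonzero, and these two constraints force $r = d/c \to \infty$. The prefactor $\|x/y\|^{s+\frac12} = \|(\det g)/c^2\|^{s+\frac12}$ blows up only polynomially in $1/|c|$, while the Fourier integral, together with all its derivatives in $r$, decays faster than any polynomial in $|r| = |d/c|$ by the Schwartz property. Hence the product, together with all partial derivatives in the ambient matrix entries (computed by the chain rule, which inserts at worst polynomial factors in $1/|c|$), vanishes to infinite order as $c \to 0$. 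Thus $\phi_{s,\varww}$ extends by zero to a smooth function on $\GL_2(F)$. The bookkeeping for the chain-rule estimate is the only technical point; once set up, the rapid decay of the Fourier integral does all the work, and no restriction on $\Re(s)$ is needed because $\varww$ is supported away from zero.
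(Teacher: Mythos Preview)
Your proposal is correct and rests on the same analytic point as the paper: the integral $\int_F \varww(v)\|v\|^s\psi(rv)\,\nd v$ is the Fourier transform of a function in $C_c^\infty(F)$, hence Schwartz in $r$, and this rapid decay kills the polynomial blow-up of the prefactor as one approaches the small cell.

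The only difference is in the reduction step. The paper uses the Iwasawa decomposition $\GL_2(F)=P(F)K$ with $K=\mathrm{SO}_2(\BR)$ or $\mathrm{SU}_2(\BC)$: since the transformation law under $P(F)$ is already verified, smoothness of $\phi_{s,\varww}$ on $\GL_2(F)$ is equivalent to smoothness of its restriction to $K$, which is then checked explicitly in the coordinates $(a,b)$ with $|a|^2+|b|^2=1$, the small cell corresponding to $b=0$. You instead work directly in the matrix entries $(a,b,c,d)$ via the Bruhat coordinates and carry out the chain-rule bookkeeping by hand. The Iwasawa reduction is tidier---it lowers the dimension and confines the analysis to a compact set---while your approach is more self-contained and avoids invoking the $P\backslash G/K$ structure. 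Both are valid and neither requires any condition on $\Re(s)$.
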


\begin{proof}
	It is clear that $\phi_{s, \varww}$  satisfies {\rm\eqref{app: defn of V(s)}}. It follows  that the smoothness of  $\phi_{s, \varww}$  is
	equivalent to the smoothness of its restriction to $\mathrm{SO}_2 (\BR)$ or $\mathrm{SU}_2 (\BC)$.  For $|a|^2 + |b|^2 = 1$ ($a, b \in \BR$ or $\BC$), with $b \neq 0$, if  we let  
	\begin{align*}
		\begin{pmatrix}
			a & - b \\
			\widebar{b} & \widebar{a}
		\end{pmatrix} = \begin{pmatrix}
			x &  u \\ & y
		\end{pmatrix} w \begin{pmatrix}
			1 & r \\ & 1
		\end{pmatrix},
	\end{align*}
	then $x = 1 / \widebar{b}$, $ y = \widebar{b} $, $u = a$, and $r =   \widebar{a } / \widebar{b}$. Therefore
	\begin{align*}
		\phi_{s, \varww} \begin{pmatrix}
			a & - b \\
			\widebar{b} & \widebar{a}
		\end{pmatrix} = \|b\|^{-2s-1} \int_{ F } \varww ( v  ) \|v \|^{s}   {\psi (    v \widebar{a } / \widebar{b} )} \nd \shskip v .
	\end{align*}
	The issue of smoothness is at  the points where $ b = 0  $, but the Fourier transform here is a rapidly
	decreasing function of $ \widebar{a } / \widebar{b} $, so  $\phi_{s, \varww}$ is smooth at these points. 
\end{proof}

\begin{lem}\label{lem: app: phi and Mphi}
	Let  $\mathrm{Re}(s) > 0$.  We have 
	\begin{align}\label{app: phiw = ...}
		\phi_{s, \varww} (1_2) = 0, \quad \phi_{s, \varww} (w) = \int_F \varww (x) \|x\|^s \nd x, 
	\end{align}
	and
	\begin{align}\label{app: M phi = ...}
		M \phi_{s, \varww} (1_2) = 0,  \quad M \phi_{s, \varww} (w) = \frac {\gamma (2s)} {\gamma (1-2s)} \int_F \varww (x) \|x\|^{-s} \nd x ,
	\end{align}
	where 
	\begin{align}\label{app: defn of gamma}
		\gamma (s) = \left\{ \begin{aligned}
			& \pi^{- s/2} \Gamma (s/2), & & \ \text{ if } F \text{ is real,}\\ 
			& 2 (2\pi)^{-s} \Gamma (s) , & & \ \text{ if } F \text{ is complex.}
		\end{aligned} \right.
	\end{align}
	
\end{lem}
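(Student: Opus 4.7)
Proof plan. The first two identities follow directly from \eqref{app: defn of phiw}. Since $1_2 \in P$ lies in the closed Bruhat cell, the defining formula gives $\phi_{s,\varww}(1_2)=0$. Writing $w = 1_2 \cdot w \cdot n(0)$ puts $w$ in the open cell with $x=y=1$ and $u=r=0$, so $\phi_{s,\varww}(w) = \int_F \varww(v)\|v\|^s\,dv$.

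For $M\phi_{s,\varww}(1_2) = \int_F \phi_{s,\varww}(wn(r))\,dr$, each $wn(r)$ lies in the open cell with parameter $r$, whence $\phi_{s,\varww}(wn(r)) = \int_F \varww(v)\|v\|^s\psi(rv)\,dv = \hat h(r)$, where $h(v) := \varww(v)\|v\|^s \in C_c^\infty(F^\times) \subset C_c^\infty(F)$ (the inclusion using that $\varww$ is supported away from $0$). Since $\hat h$ is Schwartz, Fourier inversion with the self-dual Haar measure gives $\int_F \hat h(r)\,dr = h(0) = 0$.

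The main step is $M\phi_{s,\varww}(w) = \int_F \phi_{s,\varww}(wn(r)w)\,dr$. A direct matrix calculation yields, for $r\neq 0$, the Bruhat decomposition
\[
  wn(r)w \;=\; \begin{pmatrix}1/r & -1 \\ 0 & r\end{pmatrix} w \begin{pmatrix}1 & -1/r \\ 0 & 1\end{pmatrix},
\]
so $\phi_{s,\varww}(wn(r)w) = \|r\|^{-2s-1}\int_F\varww(v)\|v\|^s\psi(-v/r)\,dv$. Substituting $r = 1/u$ (with $dr = \|u\|^{-2}\,du$ for the additive Haar measure) reorganizes the double integral into
\[
  M\phi_{s,\varww}(w) \;=\; \int_F \|u\|^{2s-1}\,\hat h(-u)\,du,
\]
which is absolutely convergent for $\Re(s)>0$ because $\hat h$ is Schwartz. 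The distributional Fourier transform identity
\[
  \int_F \|u\|^{2s-1}\psi(-uv)\,du \;=\; \frac{\gamma(2s)}{\gamma(1-2s)}\,\|v\|^{-2s}
\]
(the Tate local functional equation for the trivial character, with $\gamma$ as in \eqref{app: defn of gamma}) combined with Parseval then yields
\[
  M\phi_{s,\varww}(w) \;=\; \frac{\gamma(2s)}{\gamma(1-2s)}\int_F \varww(v)\|v\|^{-s}\,dv.
\]

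The only point requiring care is the displayed Fourier transform identity with the paper's normalizations of $\psi$, Haar measure, and $\gamma$. For $F=\BR$ it reduces by scaling to $\int_\BR |t|^{2s-1}e(t)\,dt = 2(2\pi)^{-2s}\Gamma(2s)\cos(\pi s)$ (interpreted by analytic continuation from a strip of absolute convergence), which equals $\gamma(2s)/\gamma(1-2s)$ upon applying Legendre duplication and Euler reflection. For $F=\BC$, angular integration converts the analogous integral to a Mellin transform of $J_0$, which evaluates to the same gamma ratio for the complex $\gamma(s)=2(2\pi)^{-s}\Gamma(s)$.
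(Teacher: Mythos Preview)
Your proof is correct and proceeds along the same lines as the paper: the same Bruhat decomposition of $wn(r)w$, the same substitution (the paper uses $r\mapsto -1/r$ rather than your $r\mapsto 1/u$, which is immaterial), arriving at the same absolutely convergent integral $\int_F \|u\|^{2s-1}\hat h(\pm u)\,du$ with $h(v)=\varww(v)\|v\|^s$. The only difference is in how this last integral is evaluated. The paper observes that the corresponding \emph{double} integral is not absolutely convergent and therefore justifies the formal interchange of integrations by inserting a damping factor $\exp(-c\,\vepsilon|r|)$ and passing to the limit $\vepsilon\to 0$, computing the regularized $r$-integrals explicitly. You instead invoke the Tate local functional equation (equivalently, the distributional Fourier transform of $\|u\|^{2s-1}$), which packages the same identity more cleanly at the cost of citing an external result; the paper's treatment is self-contained. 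Your verification of the gamma ratio in the last paragraph is exactly the content of the paper's first auxiliary lemma, so the two arguments ultimately rest on the same computation.
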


	The formulae in \eqref{app: phiw = ...} follow  immediately from the definitions in \eqref{app: defn of phiw}. By \eqref{app: intertwining} and \eqref{app: defn of phiw},  
	\begin{align*}
		M  \phi_{s, \varww} (1_2) = \int_{ F } \int_{ F } \varww ( x  ) \|x \|^{s}   {\psi (  r x)} \nd \shskip x \, \nd r.
	\end{align*}
	By  the Fourier inversion formula, this is the value of $ \varww ( x  ) \|x \|^{s} $ at $x = 0$. However, this function is compactly supported in $F \smallsetminus \{0\}$, so $ M  \phi_{s, \varww} (1_2) = 0 $. As for $M \phi_{s, \varww} (w)$, it follows from \eqref{app: intertwining}  that
	\begin{align*}
		M \phi_{s, \varww} (w) = \int_{ F } \phi_{s, \varww} (w \shskip n(r) w )   \nd r,
	\end{align*}
	while for $r \neq 0$ we have
	\begin{align*}
		w \shskip n(r) w = \begin{pmatrix}
			1/r & -1 \\ 
			& r
		\end{pmatrix} w \begin{pmatrix}
			1  & -1/r \\ 
			& 1
		\end{pmatrix}
	\end{align*}
	so, on changing $r$ into $-1/r$, we obtain from \eqref{app: defn of phiw} that
	\begin{align*}
		M \phi_{s, \varww} (w) = \int_{ F  } \|r\|^{2s-1} \int_F \varww (x) \|x\|^{s}  {\psi ( r x  )} \nd x \,  \nd r .
	\end{align*}
	Since $\varww (x) \|x\|^{s}$ is smooth and compactly supported, its Fourier transform is of Schwartz class, and hence  the integral is convergent and analytic for all   $\mathrm{Re}(s) > 0$.  
	
Next, we formally change the order of integration. After this, the $r$-integral may be evaluated by Lemma \ref{lem: gamma, 0} below, with $ \varnu = s $ or $2s$,  then    the formula for $M \phi_{s, \varww} (w)  $ as in \eqref{app: M phi = ...} follows.  
	
	\begin{lem}\label{lem: gamma, 0} For $0 < \mathrm{Re} (  \varnu ) < \frac 1 2$  we have 
		\begin{align}\label{app: int = gamma, R}
			\int_{0}^{ \infty} x^{2 \varnu -1} (\ree (- x y) + \ree (xy ) ) \nd x =   \frac {\pi^{\frac 1 2-2 \varnu} \Gamma (\varnu)} { y^{2 \varnu} \Gamma \big(\frac 1 2 - \varnu \big)}   ,
		\end{align}
		and  
		\begin{align}\label{app: int = gamma, C}
			2 \int_{0}^{\infty} \int_{0}^{2 \pi } x^{2 \varnu -1}  \ree (- 2 x y \cos (\phi + \omega) ) \nd \phi \, \nd x  = \frac {(2\pi)^{1-2\varnu} \Gamma (\varnu)} {y^{2\varnu} \Gamma (1-\varnu)} ,
		\end{align}
		where $y \in (0, \infty)$ and $\omega \in [0, 2\pi)${\rm;}	the integrals are  convergent conditionally. 
	\end{lem}
	\begin{proof}[Proof of Lemma \ref{lem: gamma, 0}]
		By \cite[3.761 9]{G-R} and  \cite[Lemma 4.4]{Qi-BE}, the integrals in \eqref{app: int = gamma, R} and  \eqref{app: int = gamma, C},  respectively, are equal to $ 2 (2\pi y)^{-2\varnu} \Gamma (2\varnu) \cos (\pi \varnu) $ and $ 2 (2\pi y)^{-2 \varnu} \Gamma ( \varnu)^2 \sin (\pi \varnu) $, and we arrive at the right-hand sides of \eqref{app: int = gamma, R} and  \eqref{app: int = gamma, C} by the duplication and the reflection formulae for the gamma function. 
	\end{proof}
	
	However, the  change of the order of integration is not quite rigorous as the double integral does not converge absolutely. To justify this, we introduce an exponential factor $\exp (- 2 \pi \vepsilon |r|)$ or  $\exp (- 4 \pi \vepsilon |r|)$ in the $r$-integral to ensure absolute convergence. To evaluate the $r$-integral, we use the following Lemma \ref{lem: gamma, 1} instead of Lemma \ref{lem: gamma, 0}. Finally, we proceed to the limit as $\vepsilon   \ra 0$ to conclude the proof.  
	In view of  the duplication and the reflection formulae for the gamma function, \eqref{app: int = gamma, R, 1} and \eqref{app: int = gamma, C, 1} are the limiting forms of \eqref{app: int = gamma, R} and \eqref{app: int = gamma, C}, respectively. For the complex case, note that 
	\begin{align*}
		{_2F_1} \big( \varnu, \tfrac 1 2 - \varnu; 1;   1   \big) = \frac {\sqrt{\pi}} {\Gamma  ( 1 - \varnu  ) \Gamma \big(\frac 1 2 + \varnu \big)}, 
	\end{align*}
	by the Gauss formula. 
	
	\begin{lem}\label{lem: gamma, 1} Let $\vepsilon  > 0$.  For $  \mathrm{Re} (  \varnu ) > 0$, we have 
		\begin{align}\label{app: int = gamma, R, 1}
			\int_{0}^{ \infty} x^{2 \varnu -1} \exp (- 2 \pi \vepsilon x) (\ree (- x y) + \ree (xy ) ) \nd x =  \frac { 2 \Gamma (2\varnu) \cos (2 \varnu \arctan (y/\vepsilon))} { (2\pi)^{ 2 \varnu} (y^2 + \vepsilon^2)^{\varnu}  },    
		\end{align}
		and  
		\begin{align}\label{app: int = gamma, C, 1}
			\begin{aligned}
				2 \int_{0}^{\infty} \int_{0}^{2 \pi } x^{2 \varnu -1}   \exp (- 4 \pi \vepsilon x) & \ree (- 2 x y \cos (\phi + \omega) ) \nd \phi \, \nd x \\
				& \hskip -10pt = \frac {  \Gamma (2 \varnu)} { (4 \pi)^{ 2 \varnu-1} (y^2 + \vepsilon^2)^{\varnu} } {_2F_1} \bigg( \varnu, \tfrac 1 2 - \varnu; 1; \frac {y^2} {y^2 + \vepsilon^2 } \bigg),
			\end{aligned}
		\end{align}
		where $y \in (0, \infty)$ and $\omega \in [0, 2\pi)${\rm;}	the integrals are absolutely convergent. 
	\end{lem}
	\begin{proof}[Proof of Lemma \ref{lem: gamma, 1}]
		The formula \eqref{app: int = gamma, R, 1} is a direct consequence of \cite[3.944 6]{G-R}.  As for \eqref{app: int = gamma, C, 1}, we first compute the $\phi$-integral by Bessel's formula (see \cite[2.2 (1)]{Watson})
		\begin{align*}
			J_0 (x) = \frac 1 {2\pi} \int_0^{2\pi} \exp (i x \cos \phi) \nd \phi ,  
		\end{align*}
		so that the integral in \eqref{app: int = gamma, C, 1} turns into
		\begin{align*}
			4 \pi \int_{0}^{\infty}   x^{2 \varnu -1} \exp (- 4 \pi \vepsilon x) J_0 (4 \pi x y)   \nd x ,
		\end{align*}
		and this integral can be evaluated by \cite[13.2 (3)]{Watson}, giving the right-hand side of \eqref{app: int = gamma, C, 1}. 
	\end{proof} 

Finally, for the Whittaker function associated to $\phi_{s, \varww}$ we have the following lemma. 

\begin{lem}\label{lem: W = }
	Let $W_{s, \varww} = W_{\phi}$ with $\phi = \phi_{s, \varww}$. We have
	\begin{align}
		W_{s, \varww} (a(x)) = {\textstyle \sqrt{\|x\|}} \varww (x), \quad W_{s, \varww} (a(y) w) = {\textstyle \sqrt{\| y \|}} \int_{F    ^{\times}}  \varww (x)       B_{s} (xy)  \nd  x. 
	\end{align} 
\end{lem}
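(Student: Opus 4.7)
The first identity will reduce to Fourier inversion after an Iwasawa decomposition, and the second will then follow by direct application of the kernel formula in Proposition \ref{lem: kernel formula}.

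For the first identity, I start from the Iwasawa decomposition
\[
w \shskip n(r) a(x) = \begin{pmatrix} 1 & 0 \\ 0 & x \end{pmatrix} w \shskip n(r/x),
\]
which, together with \eqref{app: defn of phiw}, gives
\[
\phi_{s, \varww}(w \shskip n(r) a(x)) = \|x\|^{-s - \frac 1 2} \int_F \varww(v) \|v\|^s \psi((r/x) v) \nd v.
\]
Inserting this into \eqref{app: Whittaker function} and rescaling $r \mapsto xr$ (which multiplies $\nd r$ by $\|x\|$) recasts $W_{s, \varww}(a(x))$ as the iterated integral
\[
\|x\|^{\frac 1 2 - s} \int_F \int_F \varww(v) \|v\|^s \psi(r(v - x)) \nd v \, \nd r.
\]
Since $\varww(v) \|v\|^s$ lies in $C_c^\infty(F^\times) \subset \mathscr{S}(F)$, Fourier inversion (after a standard interchange of integrations) collapses this double integral to $\varww(x) \|x\|^s$, yielding $W_{s, \varww}(a(x)) = \sqrt{\|x\|}\, \varww(x)$.

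For the second identity, the function $W_{s, \varww}(a(\cdot)) = \sqrt{\|\cdot\|}\, \varww(\cdot)$ now visibly belongs to $C_c^\infty(F^\times)$, so Proposition \ref{lem: kernel formula} is directly applicable with $\phi = \phi_{s, \varww}$. A single substitution gives
\[
W_{s, \varww}(a(y) w) = \int_{F^\times} \sqrt{\|x\|}\, \varww(x)\, B_s(xy) \sqrt{\|y / x\|} \nd x = \sqrt{\|y\|} \int_{F^\times} \varww(x)\, B_s(xy) \nd x,
\]
as required.

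The only delicate point in the whole argument is the justification of the interchange of integrations and the appeal to Fourier inversion in the first step; but since $\varww(v) \|v\|^s$ is smooth and compactly supported on $F$, this is entirely routine, and in particular much more elementary than the regularization argument employed in Lemma \ref{lem: app: phi and Mphi} above.
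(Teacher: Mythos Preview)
Your proof is correct and follows essentially the same route as the paper's: the same Iwasawa decomposition $w\,n(r)\,a(x) = \begin{pmatrix}1 & \\ & x\end{pmatrix} w\, n(r/x)$, the same change of variable $r \mapsto xr$, and the same appeal to Fourier inversion for the first identity, followed by a direct application of Proposition~\ref{lem: kernel formula} for the second. The only difference is cosmetic ordering---the paper changes variables before unwinding the definition of $\phi_{s,\varww}$, whereas you do it after---and your explicit remark that $W_{s,\varww}(a(\cdot)) \in C_c^\infty(F^\times)$ makes the applicability of the kernel formula slightly more transparent.
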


\begin{proof}
	By \eqref{app: Whittaker function}, \eqref{app: defn of phiw}, and Fourier inversion, we have
	\begin{align*}
		W_{s, \varww} (a (x)) & =   \int_{F} \phi_{s, \varww} (w \shskip n(r) a (x)   ) \overline{\psi ( r)} \nd r \\
		& = \|x\| \int_{F} \phi_{s, \varww} (\begin{pmatrix}
			1 & \\ & x
		\end{pmatrix}w \begin{pmatrix}
			1 & r \\ & 1
		\end{pmatrix}   ) \overline{\psi ( r x)} \nd r \\
		& = \| x \|^{\frac 1 2 - s} \int_{ F } \int_{ F } \varww ( v  ) \|v \|^{s}   {\psi (  r  v)} \nd \shskip v \, \overline{\psi ( r x)} \nd r \\
		& = {\textstyle \sqrt{\|x\|}} \varww (x). 
	\end{align*}
	The formula for $W_{s, \varww} (a(x) w)$ is precisely the kernel formula in Proposition \ref{lem: kernel formula}.
\end{proof}

\section{Proof in the Case $\mathrm{Re} (2 s) > 1$} Assume that $2 s$ is not an integer and that $\mathrm{Re} (2 s) > 1$. Let $\varww \in    \mathscr{C}^{\infty}_c (F^{\times}_{\infty})$, with $
\varww  = \prod_{v | \infty} \varww_{v}  $.  We choose $\phi_{\infty} $ to be the product     $ \prod_{v | \infty} \phi_{s, \varww_{v}}$.  In view of Lemma \ref{lem: app: phi and Mphi} and \ref{lem: W = }, if we change $\zeta$  into $-\zeta$, the sums in \eqref{app: LHS} and \eqref{app: RHS}, respectively, equal to
\begin{align}\label{app: LHS, 1}
	\frac {    1  } {   \RN (  \fra  )^{\frac 1 2} \RN (\frD)^{\frac 1 2+s }  \zeta_F(1+2s) }	\sum_{\gamma \shskip \in (\fra\frD)^{-1} \smallsetminus \{0\} }     \psi_f ( - \gamma \zeta )      \tau_s (\gamma \fra \frD)  \varww (\gamma ) ,
\end{align}
and
\begin{align}\label{app: RHS, 1}
	\begin{aligned}
		& \frac { \widetilde{\varww}_{s} (0) } {\RN  (  \frb)^{\frac 1 2+s}  }    + \frac {  \zeta_{F} (2s) \gamma_{F} (2s) } { \RN (  \frD )^{\frac 1 2} \zeta_{F} (1+2s ) \gamma_{F} (1-2s) }     \frac {  \widetilde{\varww}_{-s} (0) } { \RN  (\frb )^{\frac 1 2- s} }  \\
		  + \, &  \frac {    1  } {   \RN (  \frb  )^{\frac 1 2} \RN (\frD)^{\frac 1 2+s }  \zeta_F(1+2s) } \sum_{\gamma \shskip \in  (\frb \frD)^{-1} \smallsetminus \{0\} }    \psi_S  (   \gamma / \zeta ) \tau_{s} (\gamma   \frb \frD ) \widetilde{\varww}_{s} (\gamma)     ,  
	\end{aligned}
\end{align}
where $\gamma_{F} (s)$ is the product of the $\gamma_{v} (s)$ defined as in \eqref{app: defn of gamma}. Recall the functional equation for $\zeta_F $ (see \cite[\S XIV.8]{Lang-ANT}):
\begin{align*}
	\RN (\frD)^{s/2}	\zeta_{F} (s) \gamma_{F} (s) = \RN (\frD)^{(1 - s)/2}	\zeta_{F} (1-s) \gamma_{F} (1-s). 
\end{align*}
Hence 
\begin{align*}
	\frac {  \zeta_{F} (2s) \gamma_{F} (2s) } { \RN (  \frD )^{\frac 1 2} \zeta_{F} (1+2s ) \gamma_{F} (1-2s) } = \frac { \RN (\frD)^{-s} \zeta_F (1-2s) } {\RN (\frD)^s \zeta_F (1+2s)} . 
\end{align*}
Note that $\psi_f (- \gamma \zeta ) = \psi_{\infty} (   \gamma \zeta )$ for $\gamma, \zeta \in F$.   Since \eqref{app: LHS, 1} and \eqref{app: RHS, 1} are equal to each other,  we obtain  \eqref{app: Voronoi, tau s} after multiplying them by $  \RN (\frD)^{\frac 1 2+s} \zeta_F (1+2s) $. 

\section{Analytic Continuation}
To complete the proof, we need to verify the validity of \eqref{app: Voronoi, tau s} for all values of $s \in \BC$ by the principle of analytic continuation. To this end, it suffices to verify that both sides of  \eqref{app: Voronoi, tau s} are entire functions of $s$. 

Since $\varww  $ has compact support on $F^{\times}_{\infty}$, while $(\fra\frD)^{-1}$ is a lattice in $F_{\infty}$, the left-hand side is a finite sum and hence gives rise to an entire function of $s$. The function $\zeta_F (s)$ is analytic except for a simple pole at $s = 1$, hence the first sum on the right  is  entire, and at $s = 0$ it takes value 
\begin{align}\label{10eq: zeroth term s=0}
	 \sqrt{\RN(\frD)} \int_{F^{\sstimes}_{\scalebox{0.55}{$\infty$} } }  \varww (x) \big\{ \gamma^{(-1)}_F \log \big( \|x\|_{\infty} \RN (\frD) / \RN (\frb) \big) + 2 \gamma^{(0)}_F    \big\}  \nd x ,
\end{align}
for $\gamma^{(-1)}_F$ and $\gamma^{(0)}_F$  defined as in \eqref{2eq: zeta (s), s=1}.  Finally, the series on the right converges absolutely
and uniformly on compact subsets by Lemma \ref{lem: averages of tau s}  and \ref{lem: bounds for Hankel, local} below, with $V = 1$,  $c = \sigma + \vepsilon$, $d = 2$, and  $A = \sigma + 2$, so it converges to an entire function of $s$.  

\vskip 5pt

\subsection{Averages of Divisor Functions}
Actually, we can establish bounds, not just the convergence,  for certain averages of $\tau_{s} (\frn)$. See \cite[\S 4]{Qi-Wilton} and \cite[\S 4.3]{Qi-GL(3)} for their analogues in the cases of $\GL_2$ and $\GL_3$ cuspidal Fourier coefficients. 

\begin{lem}
	\label{lem: averages of tau s} 
Define   $N_{v} = 1$ if $F    _{v} = \BR$ and  $N_{v} = 2$ if $F    _{v} = \BC$. 	For   $V \in \BR_+^{|S_{\infty}|}$ and   $S \subset S_{\infty}$, define $\RN (V) = \prod_{ v | \infty } V_{v}^{N_{v}}$,
	$\|V \|_{S} = \prod_{ v \, \in S } V_{v}^{N_{v}}$, and 
	\begin{align} \label{4eq: defn of FS (T)}
		F_{\infty}^{S} (V) = \big\{ x \in F_{\infty} : \|x\|_{v} > V_{v}^{N_v} \text{if } v \in S, \|x\|_{v} \leqslant V_{v}^{N_v} \text{if } v \in S_{\infty} \smallsetminus S \big\} .
	\end{align}
Let $\sigma = |\mathrm{Re} (s)|$ and $0 \leqslant c - \sigma < 1 < d$.  Then for any $0 < \vepsilon     < d-1$ we have
	\begin{align}\label{3eq: average over ideals}  
		\mathop{\sum_{\sstyle  \gamma \shskip   \in   \Fx   \cap F_{\infty}^{S} (V)  }}_{  \sstyle  \gamma \fra   \subset \SO }   
		\frac { |\tau_s  (  \gamma \fra) | } {|\RN \gamma|^{c} \|\gamma \|_S^{d-c+\sigma}  } & = O_{\vepsilon, \shskip c, \shskip d, \shskip \sigma, \shskip F} \bigg(  \frac {   \RN ( \fra )^{1 + \sigma + \vepsilon} \RN(V)^{1-c + \sigma + \vepsilon} }  {\|V \|_S^{d-c + \sigma } }   \bigg),
	\end{align} 
	with the implied constant uniformly bounded for $\sigma$ in compact sets.
\end{lem}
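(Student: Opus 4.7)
The plan is to reduce the sum to a classical lattice-point estimate on $\fra^{-1}$ via the standard divisor bound, and then evaluate the resulting integral explicitly by polar decomposition at the infinite places. The first step is the elementary inequality $|\tau_s(\frn)| \leqslant \tau(\frn) \RN(\frn)^{\sigma} \ll_\vepsilon \RN(\frn)^{\sigma + \vepsilon}$ for integral $\frn$ and $\sigma \geqslant 0$, obtained directly from $\tau_s(\frn) = \RN(\frn)^{-s}\sum_{\frd \shskip | \shskip \frn}\RN(\frd)^{2s}$ together with the Dirichlet divisor bound for number fields. Inserting this with $\frn = \gamma\fra$, so that $\RN(\gamma\fra) = |\RN \gamma|\,\RN(\fra)$, and observing that $\gamma\fra \subset \SO$ is exactly the condition $\gamma \in \fra^{-1}$, reduces the problem to bounding
\begin{align*}
\RN(\fra)^{\sigma + \vepsilon} \sum_{\substack{0 \,\neq\, \gamma \,\in\, \fra^{-1} \\ \gamma \,\in\, F_\infty^S(V)}} \frac{1}{|\RN \gamma|^{c - \sigma - \vepsilon} \shskip \|\gamma\|_S^{d - c + \sigma}}.
\end{align*}

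The second step is to compare this lattice sum with the corresponding integral. Since $\fra^{-1}$ is a lattice in $F_\infty$ of covolume $\asymp \RN(\fra)^{-1}$, a dyadic lattice-point count bounds the sum by $\RN(\fra)$ times
\begin{align*}
\int_{F_\infty^S(V)} \frac{\nd x}{|\RN x|^{c - \sigma - \vepsilon} \shskip \|x\|_S^{d - c + \sigma}},
\end{align*}
which factors across the infinite places by polar decomposition. Writing $c' = c - \sigma - \vepsilon$ and $b = d - c + \sigma$, the radial factor at a place $v \notin S$ is $\int_0^{V_v} t^{N_v(1 - c') - 1}\, \nd t$, which converges thanks to the hypothesis $c - \sigma < 1$ (so $c' < 1$) and contributes $V_v^{N_v(1 - c')}$; at a place $v \in S$ the radial factor is $\int_{V_v}^\infty t^{N_v(1 - c' - b) - 1}\, \nd t$, which converges thanks to the hypothesis $\vepsilon < d - 1$ (so $c' + b = d - \vepsilon > 1$) and contributes $V_v^{-N_v(c' + b - 1)}$. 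Multiplying out gives $\RN(V)^{1 - c + \sigma + \vepsilon} \|V\|_S^{-(d - c + \sigma)}$, and restoring the two factors of $\RN(\fra)$ produces the claimed bound. Uniformity in $\sigma$ on compact sets is manifest from this explicit form.

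The main obstacle will be making the sum-to-integral comparison rigorous: dyadic boxes that are small or very elongated may contain few lattice points, and the integrand $|\RN x|^{-c'}$ is singular along the coordinate hyperplanes $\gamma_v = 0$. I plan to handle this by decomposing $F_\infty^S(V)$ dyadically in each $\|\gamma\|_v$ separately and applying an elementary count of the form $|\fra^{-1} \cap B| \ll 1 + \RN(\fra)\shskip \mathrm{vol}(B)$ on each dyadic box $B$; the exceptional $O(1)$ contributions from boxes too small for the volume term to dominate are controlled using the a priori inequality $|\RN \gamma| \geqslant \RN(\fra)^{-1}$ valid for every nonzero $\gamma \in \fra^{-1}$, so that after summation over the (logarithmically many) dyadic ranges these error terms are absorbed into the stated bound.
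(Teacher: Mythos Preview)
Your approach is correct in outline but takes a genuinely different route from the paper. The paper first bounds the \emph{ideal} sum $\sum_{\RN(\frn)\leqslant X}|\tau_s(\frn)|/\RN(\frn)^c$ by opening $\tau_s$ and using partial summation, and then invokes Lemmas~4.1--4.3 of \cite{Qi-Wilton} to pass from an ideal-norm constraint to the box constraints $\gamma\in F_\infty^S(V)$ and to insert the weight $\|\gamma\|_S^{-(d-c+\sigma)}$. You instead kill $\tau_s$ immediately with the divisor bound and reduce everything to a single lattice sum over $\fra^{-1}$, which you compare to an explicit product integral. Your route is more self-contained and avoids the external references; the paper's route is sharper in principle (a $\log X$ rather than an $X^{\vepsilon}$) but, since the statement already carries an $\vepsilon$, this gains nothing here.

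One point needs more care than ``elementary'' suggests. The count $|\fra^{-1}\cap B|\ll_F 1+\RN(\fra)\,\mathrm{vol}(B)$ is \emph{false} for general lattices even for coordinate-aligned dyadic annuli: nothing prevents a thin box from capturing many points of an arbitrary lattice. What makes it true here is the ideal structure: every $\gamma$ in such a box has $\RN(\gamma\fra)\asymp\RN(\fra)\,\mathrm{vol}(B)$, there are $\ll_F\RN(\fra)\,\mathrm{vol}(B)$ integral ideals of that norm, and for each ideal the generators lying in the box form a single bounded window in the log-unit lattice, hence $O_F(1)$ of them. This is exactly the content hidden behind the paper's citation of \cite[Lemma~4.1]{Qi-Wilton}, so you should either prove it or cite it rather than call it elementary. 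Relatedly, ``logarithmically many dyadic ranges'' undercounts: with $|S_\infty|\geqslant 2$ there are polynomially many (in $\log(\RN(\fra)\RN(V))$) relevant boxes once you decompose each coordinate separately. This does not break the argument---the resulting polylogarithmic factor is swallowed by your $\vepsilon$---but the sentence as written is inaccurate. The cleanest fix is to note that for ideal lattices the dyadic-annulus count is in fact $\ll_F\RN(\fra)\,\mathrm{vol}(B)$ (no ``$1+$''), which makes the $O(1)$ discussion unnecessary.
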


\begin{proof}
	Firstly, by partial summation, we deduce from
	\begin{align*}
		\sum_{\RN (\frn) \leqslant X} 1 = O_F (X) 
	\end{align*}
	that
	\begin{align}\label{3eq: average of tau, 1}
		\sum_{\RN (\frn) \leqslant X} \frac {|\tau_s (\frn)|} {\RN (\frn)^c}  \leqslant \sum_{\RN (\frb) \leqslant X} \frac 1 {\RN (\frb)^{c + \sigma }} \sum_{\RN(\fra)  \leqslant X/ \RN (\frb)} \frac 1 {\RN (\fra)^{c-\sigma}} \Lt_F \frac {  X^{1-c+\sigma} \log X} {1-c+\sigma}, 
	\end{align}
	for $X \geqslant 2$, provided that $0 \leqslant c - \sigma < 1$. Next, we use \eqref{3eq: average of tau, 1} as a substitute of (4.3)  in \cite{Qi-Wilton} and apply his Lemma 4.1 to prove for any  $V \in \BR_+^{|S_{\infty}|}$  (see also the proof of \cite[Lemma 4.10]{Qi-GL(3)})
	\begin{align} \label{3eq: average of tau, 2}
		\mathop{\sum_{\sstyle  \gamma \shskip   \in   \Fx   \cap F_{\infty}^{\text{\O}} (V)  }}_{  \sstyle  \gamma \fra   \subset \SO }   
		\frac { |\tau_s  (  \gamma \fra) | } {|\RN \gamma|^{c}  } & = O_{\vepsilon, \shskip c,  \shskip \sigma, \shskip F} \big(       \RN ( \fra )^{1 + \sigma + \vepsilon} \RN(V)^{1-c - \sigma + \vepsilon}     \big), 
	\end{align} 
	which is an analogue of his Lemma 4.2. Finally, we proceed as in the proof of Lemma 4.3 in \cite{Qi-Wilton} to derive \eqref{3eq: average over ideals}  from  \eqref{3eq: average of tau, 2}. It is easy to verify the uniformity in $\sigma$ at each step. 
\end{proof}

\subsection{Estimates for the Hankel Transform}

Finally, we have  the following crude but uniform estimates for the Hankel transform. For brevity, we will suppress the place $v$ from our notation.

\begin{lem}\label{lem: bounds for Hankel, local}
Let $\sigma = |\mathrm{Re} (s)|$.	For $\varww (x) \in C_c^{\infty} (\Fx )$ we have 
	\begin{align*}
		\int_{\Fx } \varww  (x) B_s (x y) d x \Lt_{s, \shskip  \vepsilon , \shskip A , \shskip \varww}   \left\{ \begin{aligned}
			& 1/ {\|y\|^{   \sigma + \vepsilon }}, & & \text{ if } \|y\| \leqslant 1,  \\
			& 1 / {\|y\|^{A}} , & & \text{ if } \|y\| > 1,  
		\end{aligned}      \right.  
	\end{align*}
	for any  $\vepsilon > 0$ and $A \geqslant 0$, with  the implied constants uniformly bounded for $s$ in compact sets. 
\end{lem}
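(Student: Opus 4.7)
I would split the range of $\|y\|$ into two regimes and handle them with complementary tools: the power-series expansion of Bessel functions near $0$ for $\|y\| \leq 1$, and their asymptotic expansion at infinity plus repeated integration by parts for $\|y\| > 1$.

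\textbf{Small $\|y\|$.} Since $\varww \in C_c^\infty(F^\times)$, its support lies in an annulus $c_1 \leq \|x\| \leq c_2$ with $c_1 > 0$, so for $\|y\| \leq 1$ the product $xy$ remains in a bounded region. Inserting the Taylor series $J_\nu(z) = \sum_{k \geq 0}(-1)^k (z/2)^{\nu+2k}/(k!\,\Gamma(\nu+k+1))$ into Definition \ref{defn: Bessel kernel} yields a convergent expansion of $B_s(xy)$ whose leading terms are $c_+(s)\|xy\|^{-s} + c_-(s)\|xy\|^{s}$, possibly carrying an extra $\log\|xy\|$ factor at the exceptional values of $s$ at which $\sin(\pi s)$ or $\sin(2\pi s)$ vanishes (where $B_s$ is understood via a limit). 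This yields $|B_s(xy)| \ll \|xy\|^{-\sigma}(1 + |\log \|xy\||)$; restricting to the support of $\varww$ and absorbing the logarithm into an arbitrarily small power, we obtain $|B_s(xy)| \ll_{s, \vepsilon} \|y\|^{-\sigma - \vepsilon}$ uniformly for $s$ in compacta, and integration against $\varww$ gives the first half of the bound.

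\textbf{Large $\|y\|$.} For $\|y\| > 1$ I would invoke the asymptotic expansion $J_\nu(z) \sim (2/\pi z)^{1/2}\cos(z - \nu\pi/2 - \pi/4)$ as $|z|\to\infty$, with uniform remainders of arbitrary order. In the real case this writes $B_s(\pm x)$ as a finite sum $\sum_{\pm} a_\pm(s)\, \|x\|^{-1/4} e^{\pm 4\pi i\sqrt{x}} + O_N(\|x\|^{-N})$; in the complex case, an analogous expansion of $B_s(z)$ holds, in which the cross terms carrying the phase $\exp(\pm 4\pi i(\sqrt z - \sqrt{\bar z}))$ oscillate in the imaginary direction and therefore decay exponentially in $\|z\|^{1/2}$, while the remaining terms carry a purely real phase. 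After the substitution $u = \sqrt{x}$ (and its complex analogue), the integral reduces to a finite sum of oscillatory integrals
\begin{equation*}
I(y) = \int_{F^{\times}} g(u)\, e\bigl(\alpha\, u \sqrt{y}\bigr)\, du,
\end{equation*}
with $g \in C_c^\infty(F^\times)$ independent of $y$ and a nonzero real constant $\alpha$. Integration by parts against this phase gains a factor $\|y\|^{-1/2}$ per step, so iterating sufficiently many times yields $|I(y)| \ll_A \|y\|^{-A}$ for any $A \geq 0$.

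\textbf{Main obstacle.} The only genuine nuisance is the uniform control of all implied constants as $s$ ranges over a compact subset of $\BC$, particularly near the exceptional values of $s$ where $\sin(\pi s)$ or $\sin(2\pi s)$ vanishes and $B_s$ must be defined by a limit. Passing to the limit introduces logarithmic corrections in the near-origin expansion; these are precisely what the $\vepsilon$-loss in the small-$\|y\|$ bound absorbs. In the large-$\|y\|$ regime they are harmless, since the coefficients $a_\pm(s)$ and the constants in the remainders of the asymptotic expansion of $J_\nu$ remain bounded there once one uses the limiting formulae.
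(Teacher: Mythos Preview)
Your overall strategy mirrors the paper's: control $B_s$ near the origin for small $\|y\|$, and use the large-argument asymptotics together with repeated integration by parts for large $\|y\|$. For the small-$\|y\|$ regime the paper proceeds somewhat differently, writing $B_s$ as a Mellin--Barnes integral over a contour that can be fixed once the compact set and $\vepsilon$ are chosen; this makes the uniformity in $s$ automatic, whereas your power-series argument needs a more careful case split (essentially $|s\log\|xy\||\lessgtr 1$) to justify the uniform bound $\|xy\|^{-\sigma}(1+|\log\|xy\||)$ near the exceptional values.

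There is, however, a genuine gap in your complex large-$\|y\|$ argument. The cross terms with phase $\exp\bigl(\pm 4\pi i(\sqrt z-\sqrt{\bar z})\bigr)=\exp\bigl(\mp 8\pi\,\Im\sqrt z\bigr)$ are real exponentials, not oscillatory: for one choice of sign they \emph{grow} like $\exp(c\,\|z\|^{1/4})$. If such a term survived in $B_s(xy)$ it would make the integral exponentially large in $\|y\|$, and integration by parts would not help. The correct statement is that these cross terms \emph{cancel exactly} in $B_s(z)$. This is most transparent after rewriting $B_s$ via Hankel functions, as the paper does:
\begin{equation*}
B_s(z)=\pi^2 i\bigl(e^{2\pi is}H^{(1)}_{2s}(4\pi\sqrt z)H^{(1)}_{2s}(4\pi\sqrt{\bar z})-e^{-2\pi is}H^{(2)}_{2s}(4\pi\sqrt z)H^{(2)}_{2s}(4\pi\sqrt{\bar z})\bigr),
\end{equation*}
where only the products $H^{(1)}H^{(1)}$ and $H^{(2)}H^{(2)}$ occur, each carrying the purely oscillatory phase $e(\pm 2\,\Tr\sqrt z)$; from here your integration-by-parts argument goes through. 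Equivalently, the connection formulae $H^{(1)}_{-\nu}=e^{i\pi\nu}H^{(1)}_\nu$ and $H^{(2)}_{-\nu}=e^{-i\pi\nu}H^{(2)}_\nu$ give $H^{(1)}_{-2s}(a)H^{(2)}_{-2s}(b)=H^{(1)}_{2s}(a)H^{(2)}_{2s}(b)$, so the mixed terms drop out of $J_{-2s}J_{-2s}-J_{2s}J_{2s}$ before one ever inserts the asymptotics.
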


\begin{proof} 
	For fixed $s$ the estimates follow immediately from Theorem 3.12, 3.15 and Proposition 3.14, 3.17 in \cite{Qi-Bessel}. However, to prove the uniformity in $s$, we require uniform bounds and asymptotics for the Bessel kernel $B_s $.

	Now fix $c \geqslant 1$ and let  $|s| \leqslant c$.   
	
	Proceeding as in  \cite[\S 5.1]{Qi-Wilton},  by estimating the Mellin--Barnes type integrals of certain gamma factors, for $|x| \leqslant c^4 $ we deduce the bounds
	\begin{align*}
		B_s (x) \Lt_{c, \shskip \vepsilon} 1 / \|x\|^{\sigma +\vepsilon}.
	\end{align*}
	It is critical that the integral contours therein can be chosen fixed for given $c$ and $\vepsilon$. Then  the first uniform estimate follows directly.  
	
	Next, we invoke the formulae 
	\begin{align*}
		& B_{s} (x)  = \pi i \big( e^{\pi i s}  H^{(1)}_{2 s} (4\pi \sqrt{x}) -  e^{- \pi i s}   H^{(2)}_{2 s} (4\pi \sqrt{x}) \big), \\
		& B_{s} (-x )  =    {4 \cos (\pi s  )}    K_{2 s } (4 \pi \sqrt {x }) ,
	\end{align*}
	for $x \in \BR_+$, and
	\begin{align*}
		B_{s} (z) = \pi^2 i \big( e^{2 \pi i s} H^{(1 )}_{ 2s   } (4 \pi  \sqrt{z} ) H^{(1 )}_{  2 s    }   (4 \pi  {\textstyle \sqrt{\widebar{z}}} )  - e^{- 2 \pi i s} H^{(2 )}_{ 2s   } (4 \pi  \sqrt{z} ) H^{(2 )}_{  2 s    }   (4 \pi  {\textstyle \sqrt{\widebar{z}}} )    \big), 
	\end{align*} 
	for $z \in \BC^{\times}$; see  \cite[(3.61 (1), (2))]{Watson}. 
	By \cite[\S 7.13.1, Ex. 13.2]{Olver}, we deduce the uniform asymptotic formulae:
	\begin{align*}
		& B_s (x) =   \sum_{ \pm} \frac {\ree (\pm (2 \sqrt{x} + 1/8))} {x^{1/4}} \sum_{k =0}^{K-1} \frac {(\pm)^k A_k (s)} {x^{k/2}} + O_{c, \shskip K} \bigg(  \frac 1 {x^{(2K+1)/4}} \bigg),  \\
		& B_s (-x) = O_{c} \bigg( \frac {\exp (-4\pi \sqrt{x})} {x^{1/4}} \bigg), 
	\end{align*}
	for $x > c^4$, and 
	\begin{align*}
		& B_s (z) =   \sum_{ \pm} \frac {\ree (\pm 2 \shskip \Tr \sqrt{z})} {|z|^{  1 / 2}} \mathop{\sum\sum}_{ k, \shskip l = 0}^{  K-1}  \frac {(\pm)^{k+l} A_k (s) A_l (s) } {z^{k/2} \widebar{z}^{l/2} } + O_{c, \shskip K} \bigg(  \frac 1 {|z|^{(K+1)/2}} \bigg),   
	\end{align*}
	for $|z| > c^4$, 
	where $K$ is any non-negative integer, and the coefficient $A_k (s)$ is a certain polynomial in $  s$ of degree $2 k$. Then the second uniform estimate follows from repeated partial integration (we obtain Fourier integrals on letting $\sqrt{x}$ or $\sqrt{z}$ be the new variable) or directly from the exponential decay  (in the real case). 
\end{proof}


\end{document}